\documentclass[10pt,a4paper]{article}

\usepackage[utf8]{inputenc}
\usepackage[T1]{fontenc}
\usepackage{lmodern}
\usepackage[width=14.00cm,height=25.00cm]{geometry}
\usepackage{amsmath,amssymb,amsfonts,amsthm,mathtools}
\usepackage{enumerate}
\usepackage{microtype}
\usepackage{tikz}
\usepackage{hyperref}

\newtheorem{theorem}{Theorem}[section]
\newtheorem{proposition}[theorem]{Proposition}
\newtheorem{lemma}[theorem]{Lemma}
\newtheorem{corollary}[theorem]{Corollary}
\theoremstyle{definition}
\newtheorem{example}[theorem]{Example}
\theoremstyle{remark}
\newtheorem{remark}[theorem]{Remark}
\theoremstyle{plain}

\newcommand{\Fil}{\operatorname{Fil}}
\newcommand{\Cov}{\operatorname{Cov}}
\newcommand{\cl}{\operatorname{cl}}
\newcommand{\adh}{\operatorname{adh}}
\newcommand{\inh}{\operatorname{inh}}
\newcommand{\Cl}{\operatorname{Cl}}
\newcommand{\fin}{\mathrm{fin}}
\newcommand{\Alice}{\operatorname{Alice}}
\newcommand{\Bob}{\operatorname{Bob}}
\newcommand{\wins}{\mathrel{\uparrow}}
\newcommand{\notwins}{\mathrel{\not\uparrow}}

\title{Menger and Rothberger games on convergence spaces}
\author{Renan Mezabarba and Rodrigo Monteiro}
\date{}

\hypersetup{
  colorlinks=true,
  linkcolor=black,
  citecolor=black,
  urlcolor=black,
  pdftitle={Menger and Rothberger games on convergence spaces},
  pdfauthor={Renan Mezabarba and Rodrigo Monteiro},
  pdfsubject={Selection principles and games on convergence spaces},
  pdfkeywords={convergence space, selection principle, Menger game,
    Rothberger game}
}

\newcommand{\Addresses}{{
  \bigskip
  \footnotesize
  R.~Mezabarba, \textsc{Departamento de Ci\^encias Exatas, Universidade
  Estadual de Santa Cruz\\
  Ilh\'eus, BA 45662-900, Brazil}\par\nopagebreak
  \textit{E-mail address}, R.~Mezabarba:
  \texttt{rmmezabarba@uesc.br}

  \medskip
  R.~Monteiro, \textsc{Instituto de Ci\^encias Matem\'aticas e de
  Computa\c{c}\~ao, Universidade de S\~ao Paulo\\
  Avenida Trabalhador S\~ao-carlense 400, S\~ao Carlos,
  SP 13566-590, Brazil}\par\nopagebreak
  \textit{E-mail address}, R.~Monteiro:
  \texttt{rodrigosm@usp.br}
}}

\begin{document}

\maketitle

\begin{abstract}
We introduce Menger and Rothberger selection principles and games for
convergence spaces.  Alice plays families that meet every convergent filter,
and Bob's selections are required either to retain this property or merely to
cover the underlying set.  When the convergence is topological, both games
recover the classical games.  The winning condition
requiring an $L$-cover satisfies analogues of the Hurewicz and Pawlikowski
characterizations, the condition requiring a cover of $X$ is represented by
the weak Menger and Rothberger games.  We also show that, for a regular
convergence space, a winning strategy for Bob in
$\mathsf G_{\fin}(\mathcal C_L,\Cov(X))$ implies an Alster-type covering
property.  Under hereditary Lindel\"ofness the space is moreover a countable
union of compactoid subsets, which are compact in the pretopological case.
\end{abstract}

\medskip
\noindent\textbf{2020 Mathematics Subject Classification:}
 54A20, 54D20, 91A44.

\smallskip
\noindent\textbf{Keywords.}
Convergence space, selection principle, Menger game, Rothberger game.

\section{Introduction}
\label{sec:introduction}

The modern theory of selection principles has its origins in Menger's 1924
covering property.  Hurewicz soon reformulated and studied that property, and
Rothberger isolated a stronger one in 1938
\cite{Menger1924,Hurewicz1926,Rothberger1938}.  In the notation later
systematized by Scheepers, these are
$\mathsf S_{\fin}(\mathcal O,\mathcal O)$ and
$\mathsf S_1(\mathcal O,\mathcal O)$, respectively
\cite{Scheepers1996}.  Their associated games are among the basic examples of
topological selection games.  The theorems of Hurewicz and Pawlikowski characterize
the two properties by
$\Alice\notwins\mathsf G_{\fin}(\mathcal O,\mathcal O)$ and
$\Alice\notwins\mathsf G_1(\mathcal O,\mathcal O)$, respectively
\cite{Hurewicz1926,Pawlikowski1994}.  The interaction between selections,
covers, and games remains central in the subject; see, for example,
\cite{SzewczakTsaban2020}.

The theory of convergence spaces originates in work of Choquet and Fischer
\cite{Choquet1948,Fischer1959}; we use the terminology of Dolecki and Mynard
\cite{DoleckiMynard2016}.  A convergence is specified directly by declaring
which filters converge to which points, rather than indirectly through a
topology.  Every topological space canonically determines a convergence space,
but a general convergence need not be determined by its open sets.  Covering
properties therefore require a formulation that explicitly retains
the convergent filters.

There is also a history of covering properties in convergence theory.  Feldman
used covering systems and basic subcoverings to formulate countability axioms
for convergence spaces \cite{Feldman1973}.  More recently, Mr\v{s}evi\'c and
Jeli\'c studied interior covers and their selection principles in \v{C}ech
closure spaces, together with related hyperspace constructions
\cite{MrsevicJelic2006,MrsevicJelic2008a,MrsevicJelic2008b}.  Since \v{C}ech
closure spaces give an equivalent description of pretopological convergence,
these papers provide a natural precedent for the present work.  Our aim is to
begin a systematic study of the corresponding games, starting with the
Menger and Rothberger cases for arbitrary convergences.

For a convergence space $(X,L)$, the relevant families are those that contain
a member of every filter with a nonempty $L$-limit set.  We call them
$L$-covers; they are also known as covering systems.  There are two natural
winning conditions: Bob's selections may be required to form an
$L$-cover, or merely to cover the underlying set.  Proposition~\ref{prop:topological-games}
shows that both choices recover the classical games whenever $L$ is induced by
a topology.  They differ for general convergences: Example~\ref{ex:branches}
gives a countable regular Hausdorff nonpretopological convergence space that
separates the two games in both their single- and finite-selection forms.

Our first principal representation, Theorem~\ref{thm:filter-games}, shows that,
for every convergence space $(X,L)$ and $\nu\in\{1,\fin\}$, the games
$\mathsf G_\nu(\mathcal C_L,\mathcal C_L)$ and
$\mathsf G_\nu(\mathcal C_L,\Cov(X))$ are strategically equivalent,
respectively, to the classical and weak covering games on a canonical
topological space of convergent filters.
Consequently, the Hurewicz and Pawlikowski characterizations hold without
additional hypotheses for $\mathsf G_\nu(\mathcal C_L,\mathcal C_L)$
(Corollary~\ref{cor:classical-transfer}).  For
$\mathsf G_\nu(\mathcal C_L,\Cov(X))$, Theorem~\ref{thm:weak-transfer} gives
the corresponding characterizations for the weak games under a Menger-type or
a countability hypothesis, respectively, and
Proposition~\ref{prop:strategywise-weak-menger} localizes the latter hypothesis
to a fixed strategy.

Finally, Theorem~\ref{thm:alster} shows that every regular convergence space
$(X,L)$ satisfying
\[
 \Bob\wins\mathsf G_{\fin}(\mathcal C_L,\Cov(X))
\]
has an Alster-type covering property~\cite{Alster1988}.  In the topological case this recovers
the theorem of Aurichi and Dias \cite{AurichiDias2014}.  If the convergence is
also hereditarily Lindel\"of in Feldman's sense, Theorem~\ref{thm:decomposition}
strengthens the conclusion to a countable union of $L$-compactoid subsets; for
regular pretopologies these subsets are compact
(Corollary~\ref{cor:compact-decomposition}).

\section{Background on selection games and convergence spaces}\label{sec:games}

Let $\mathcal A$ and $\mathcal B$ be collections of families of sets.  The
principle $\mathsf S_1(\mathcal A,\mathcal B)$ asserts that, for every sequence
$\langle\mathcal U_n:n<\omega\rangle$ of members of $\mathcal A$, there are
$U_n\in\mathcal U_n$ such that $\{U_n:n<\omega\}\in\mathcal B$.  In
$\mathsf S_{\fin}(\mathcal A,\mathcal B)$ one instead chooses finite
$\mathcal V_n\subseteq\mathcal U_n$ so that
$\bigcup_{n<\omega}\mathcal V_n\in\mathcal B$.

The corresponding games are denoted by
$\mathsf G_1(\mathcal A,\mathcal B)$ and
$\mathsf G_{\fin}(\mathcal A,\mathcal B)$.  In inning $n$, Alice chooses
$\mathcal U_n\in\mathcal A$.  Bob then chooses a member of $\mathcal U_n$ in
$\mathsf G_1(\mathcal A,\mathcal B)$, or a finite subfamily of $\mathcal U_n$
in $\mathsf G_{\fin}(\mathcal A,\mathcal B)$.  Bob wins if the total selection
belongs to $\mathcal B$.

When discussing a strategy for Bob, we record a finite position by the
corresponding finite sequence of Alice's moves.  If
$s=\langle\mathcal U_0,\ldots,\mathcal U_{n-1}\rangle$ and $\mathcal U$ is a
legal next move, then
$s\mathbin{\smallfrown}\langle\mathcal U\rangle$ is the sequence obtained by
adjoining $\mathcal U$ to $s$.  Thus, if $\sigma$ is a strategy for Bob,
$\sigma(s\mathbin{\smallfrown}\langle\mathcal U\rangle)$ denotes his response
to that position.  We write $\Alice\wins\mathsf G$ and $\Bob\wins\mathsf G$
when the indicated player has a winning strategy in $\mathsf G$; the notation
$\Alice\notwins\mathsf G$ and $\Bob\notwins\mathsf G$ has the corresponding
negative meaning.

We shall use the following general implication.  If
$\mathsf S_\nu(\mathcal A,\mathcal B)$ fails, a sequence witnessing its failure
prescribes a winning strategy for Alice: in inning $n$ she plays the $n$th
member of that sequence, independently of the earlier moves.  Hence
\[
 \Alice\notwins\mathsf G_\nu(\mathcal A,\mathcal B)
 \quad\Longrightarrow\quad
 \mathsf S_\nu(\mathcal A,\mathcal B),
 \qquad \nu\in\{1,\fin\}.
\]

Given a set $X$, a \emph{filter} on $X$ is a nonempty family
$\mathcal F\subseteq\mathcal P(X)$ that is closed under finite
intersections and upward closed, i.e., if $A\in\mathcal F$ and
$A\subseteq B\subseteq X$, then $B\in\mathcal F$.  A filter
$\mathcal F$ is \emph{proper} if $\varnothing\notin\mathcal F$.  An
\emph{ultrafilter} on $X$ is a proper filter that is maximal, with
respect to inclusion, among the proper filters on $X$. Equivalently,
$\mathcal U$ is an ultrafilter if and only if $A\in\mathcal U$ or
$X\setminus A\in\mathcal U$ for every $A\subseteq X$.
All filters in this paper are proper.  We write $\Fil(X)$ for the set
of filters on $X$, $\dot x$ for the principal ultrafilter at $x$
(i.e., $\dot x=\{A\subseteq X : x\in A\}$), and $A^\uparrow$ for the
principal filter generated by a nonempty set $A$ (i.e.,
$A^\uparrow=\{B\subseteq X : A\subseteq B\}$).  A filter $\mathcal G$
is finer than $\mathcal F$ when $\mathcal F\subseteq\mathcal G$.  Two
families $\mathcal A$ and $\mathcal B$ \emph{mesh}, written
$\mathcal A\mathrel{\#}\mathcal B$, if $A\cap B\ne\varnothing$
whenever $A\in\mathcal A$ and $B\in\mathcal B$.
When one argument is a set $A$, the notation $\mathcal F\mathrel{\#}A$
means that $F\cap A\ne\varnothing$ for every $F\in\mathcal F$.
A \emph{convergence} on $X$ is a function
\[
 L\colon\Fil(X)\longrightarrow\mathcal P(X)
\]
such that $x\in L(\dot x)$ for every $x\in X$ and
$L(\mathcal F)\subseteq L(\mathcal G)$ whenever $\mathcal G$ is finer
than $\mathcal F$.  We write $\mathcal F\to_Lx$ when
$x\in L(\mathcal F)$.  Every topology $\tau$ induces a convergence defined by
$
 \mathcal F\to_Lx$ if and only if $
 \mathcal N_\tau(x)\subseteq\mathcal F,
$
where $\mathcal N_\tau(x)$ is the neighborhood filter at $x$.
Conversely, the \emph{topological modification} $\tau_L$ consists of the sets
$O\subseteq X$ such that $O\in\mathcal F$ whenever $x\in O$ and
$\mathcal F\to_Lx$.

A convergence $L$ is a \emph{pretopology} if every $x\in X$ has a least
filter converging to $x$, denoted by $\mathcal V_L(x)$.  Equivalently,
$
 \mathcal F\to_Lx
 $ if and only if $
 \mathcal V_L(x)\subseteq\mathcal F.
$
Every topological convergence is a pretopology, with
$\mathcal V_L(x)=\mathcal N_\tau(x)$.

Consider the set of convergent filters
$
 \Sigma_L=\{\mathcal F\in\Fil(X):L(\mathcal F)\ne\varnothing\}
$
and denote by $\Cov(X)$ the family of all covers of $X$.  A family
$\mathcal U\subseteq\mathcal P(X)$ is an \emph{$L$-cover} if
$
 \mathcal U\cap\mathcal F\ne\varnothing
$
for every convergent filter $\mathcal F\in\Sigma_L$.
The family of all $L$-covers is denoted by $\mathcal C_L$.  Since
$\dot x\in\Sigma_L$ for every $x\in X$, one has
$\mathcal C_L\subseteq\Cov(X)$.

For a pretopology, put
$\inh_L(A)=\{x\in X:A\in\mathcal V_L(x)\}$.  In the equivalent language of
\v{C}ech closure spaces this is the interior of $A$, and $\mathcal U$ is an
$L$-cover exactly when $\{\inh_L(U):U\in\mathcal U\}$ covers $X$.  Thus these
are precisely the interior covers considered in
\cite{MrsevicJelic2006,MrsevicJelic2008a,MrsevicJelic2008b}.

We shall study
\[
 \mathsf G_\nu(\mathcal C_L,\mathcal C_L)
 \quad\text{and}\quad
 \mathsf G_\nu(\mathcal C_L,\Cov(X)),
 \qquad \nu\in\{1,\fin\}.
\]
The first asks that Bob's selections remain an $L$-cover, whereas the second
only asks that they cover $X$.  Since $\mathcal C_L\subseteq\Cov(X)$,
\[
 \Bob\wins\mathsf G_\nu(\mathcal C_L,\mathcal C_L)
 \Longrightarrow
 \Bob\wins\mathsf G_\nu(\mathcal C_L,\Cov(X)),
\]
and
\[
 \Alice\wins\mathsf G_\nu(\mathcal C_L,\Cov(X))
 \Longrightarrow
 \Alice\wins\mathsf G_\nu(\mathcal C_L,\mathcal C_L),
\]
for $\nu\in\{1,\fin\}$.

For a topology $\tau$ on $X$, let $\mathcal O(X,\tau)$ denote the family of
all open covers of $X$; we do not exclude covers that contain $X$ itself.

\begin{proposition}\label{prop:topological-games}
If $L$ is induced by a topology $\tau$ and $\nu\in\{1,\fin\}$, then each of
\[
 \mathsf G_\nu(\mathcal C_L,\mathcal C_L)
 \quad\text{and}\quad
 \mathsf G_\nu(\mathcal C_L,\Cov(X))
\]
is strategically equivalent to
$\mathsf G_\nu(\mathcal O(X,\tau),\mathcal O(X,\tau))$.  The same translations
identify the corresponding selection principles.
\end{proposition}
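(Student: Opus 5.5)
The plan is to compare $L$-covers with open covers through the interior operator, and then transfer strategies move by move. When $L$ is induced by $\tau$, the convergence is a pretopology with $\mathcal V_L(x)=\mathcal N_\tau(x)$. Hence $\inh_L(A)=\operatorname{int}_\tau(A)$, and by the remark after the definition of $\inh_L$, a family $\mathcal U$ is an $L$-cover exactly when $\{\operatorname{int}_\tau U:U\in\mathcal U\}$ covers $X$. We record two consequences.
\begin{enumerate}[(a)]
\item Every open cover is an $L$-cover, so $\mathcal O(X,\tau)\subseteq\mathcal C_L$.
\item For any $\mathcal U\in\mathcal C_L$, the family $\mathcal U^\circ=\{\operatorname{int}_\tau U:U\in\mathcal U\}$ is an open cover. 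It refines $\mathcal U$ through a fixed choice map $U\mapsto\operatorname{int}_\tau U$.
\end{enumerate}
For the target families we also need the following. If $\mathcal V\subseteq\mathcal U$ and $\mathcal V^\circ$ covers $X$, then $\mathcal V\in\mathcal C_L\subseteq\Cov(X)$. Conversely, for open families, covering $X$ is the same as being an $L$-cover. The two games differ only in the winning set, and this is where the care is needed.

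\emph{From the classical game to either $L$-game.} Let $\sigma$ be a winning strategy for Bob in $\mathsf G_\nu(\mathcal O,\mathcal O)$. Bob answers a move $\mathcal U_n\in\mathcal C_L$ by feeding $\mathcal U_n^\circ$ to $\sigma$. If $\sigma$ picks $\operatorname{int}_\tau U$ (or finitely many such sets), Bob selects a preimage $U\in\mathcal U_n$ under a fixed choice function. The selected interiors cover $X$, so the selected sets form an $L$-cover, and Bob wins both $L$-games.

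For Alice, let $\varphi$ be a winning strategy in $\mathsf G_\nu(\mathcal C_L,\Cov(X))$. She plays $\varphi(\dots)^\circ$ in the classical game and pulls back Bob's open selections through the same choice map. Suppose the classical selections covered $X$. Then the pulled-back sets form an $L$-cover, hence a cover, and this contradicts that $\varphi$ wins. So the open selections fail to cover, and Alice wins the classical game.

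\emph{From either $L$-game to the classical game.} By (a), open covers are legal moves in the $L$-games. Given a winning strategy for Bob in $\mathsf G_\nu(\mathcal C_L,\Cov(X))$, Bob uses it directly against open covers. The selections are open sets that cover $X$, so they form an open cover, and he wins the classical game. The same works starting from $\mathsf G_\nu(\mathcal C_L,\mathcal C_L)$, whose winning set is smaller.

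For Alice, let $\psi$ be winning in $\mathsf G_\nu(\mathcal O,\mathcal O)$. In either $L$-game she plays the open covers $\psi$ prescribes, and Bob's responses are legal classical responses. Since $\psi$ wins, the selected open sets do not cover $X$, so they are not an $L$-cover either. Alice therefore wins both $L$-games.

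Combining the two directions, each $L$-game is equivalent to the classical game for both players. This also forces the two $L$-games to be equivalent to each other, even though $\Cov(X)$ is a larger winning set. The selection principles follow by the same translations applied to sequences, with no dependence on earlier moves:
\begin{itemize}
\item a sequence in $\mathcal C_L$ is replaced by its sequence of interiors, with selections pulled back;
\item a sequence of open covers is already a sequence in $\mathcal C_L$.
\end{itemize}

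The main obstacle is the Bob direction from $\mathsf G_\nu(\mathcal C_L,\Cov(X))$. Only a cover is guaranteed there, not an $L$-cover, and Bob cannot pull a general strategy back to interiors. The fix is to play only open covers, where covering and being an $L$-cover coincide. Beyond that, one must check that the finite-selection version works with the preimage choice map; this only needs selected preimages to be finite subsets of $\mathcal U_n$, which is immediate.
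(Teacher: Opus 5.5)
Your proposal is correct and follows essentially the same route as the paper. Both arguments rest on the equivalence $\mathcal U\in\mathcal C_L \iff \{\operatorname{int}_\tau U:U\in\mathcal U\}$ covers $X$, use the same four translations (interiors with a fixed choice of preimages in one direction, open covers played unchanged in the other), and apply these verbatim to sequences for the selection principles. The differences are cosmetic: the paper discards empty interiors when forming $\mathcal U^\circ$ and proves the interior equivalence directly rather than citing the $\inh_L$ remark.
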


\begin{proof}
For every family $\mathcal U\subseteq\mathcal P(X)$,
\begin{equation}\label{eq:topological-l-cover}
 \mathcal U\in\mathcal C_L
 \quad\Longleftrightarrow\quad
 \{\operatorname{int}_\tau U:U\in\mathcal U\}
 \text{ covers }X.
\end{equation}
Indeed, if $\mathcal U$ is an $L$-cover, then it meets the convergent filter
$\mathcal N_\tau(x)$ for every $x\in X$; hence some $U\in\mathcal U$ is a
neighborhood of $x$.  Conversely, if $x\in\operatorname{int}_\tau U$ and
$\mathcal F\to_Lx$, then
$\operatorname{int}_\tau U\in\mathcal N_\tau(x)\subseteq\mathcal F$, and
therefore $U\in\mathcal F$.

Suppose first that Bob has a winning strategy in the classical game.  When
Alice plays an $L$-cover $\mathcal U$, Bob presents that strategy with the
open cover formed by the nonempty interiors of members of $\mathcal U$.  For
each distinct interior he retains one member of $\mathcal U$ having that
interior, and returns to the convergence game the members corresponding to
the open sets selected by the classical strategy.  Those open sets cover
$X$, so \eqref{eq:topological-l-cover} shows that the returned family is an
$L$-cover; in particular, it also covers $X$.  This gives Bob a winning
strategy for either convergence-space winning condition.

Conversely, suppose that Bob has a winning strategy in one of the
convergence-space games.  Alice's open covers may be played unchanged as
$L$-covers.  Bob's selected open sets cover $X$: this follows directly when
the target is $\Cov(X)$, and follows from
\eqref{eq:topological-l-cover} when the target is $\mathcal C_L$.  Thus Bob
has a winning strategy in the classical game.

Now suppose that Alice has a winning strategy in one of the convergence-space
games.  In the classical game she replaces each $L$-cover prescribed by her
strategy with its cover by nonempty interiors, retaining one original member
for each distinct interior.  She reports Bob's selected interiors to her
original strategy through these corresponding members.  If the latter fail
to form an $L$-cover, then the selected interiors fail to cover $X$ by
\eqref{eq:topological-l-cover}; if they fail merely to cover $X$, their
interiors fail to cover $X$ as well.  Hence the translated classical strategy
is winning.

Finally, if Alice has a winning strategy in the classical game, she may play
its open covers unchanged in either convergence-space game.  Any selected
family that fails to cover $X$ belongs neither to $\Cov(X)$ nor, by
\eqref{eq:topological-l-cover}, to $\mathcal C_L$.  These four translations
preserve finite histories and prove strategic equivalence.  Applied to a
sequence of covers rather than a strategy, the same translations prove the
assertion about selection principles.
\end{proof}

We now show how these two games can differ.

\begin{example}\label{ex:branches}
Let $X=2^{<\omega}$ be the set of finite binary sequences, let
$x_*=\varnothing$ be the empty sequence, and regard $X$ as the binary tree
rooted at $x_*$.  For $f\in2^\omega$ and $n<\omega$, put
\[
 C(f,n)=\{x_*\}\cup
         \{f\mathbin{\upharpoonright}m:m\ge n\},
\]
and let $\mathcal V_f$ be the filter generated by
$\{C(f,n):n<\omega\}$.  We define $L$ by declaring that the only filter
converging to a non-root node $s$ is $\dot s$, while
\[
 \mathcal F\to_Lx_*
 \quad\Longleftrightarrow\quad
 \mathcal V_f\subseteq\mathcal F
 \text{ for some }f\in2^\omega.
\]

For each $f\in2^\omega$, put
$B_f=\{f\mathbin{\upharpoonright}m:m<\omega\}$ and
$\mathcal U_*=\{B_f:f\in2^\omega\}$.  The family $\mathcal U_*$ is an
$L$-cover: indeed, $B_f=C(f,0)\in\mathcal V_f$ and every non-root node lies
on some branch.

\begin{figure}[htbp]
\centering
\begin{tikzpicture}[
	scale=.88,
	every node/.style={font=\small},
	treeedge/.style={draw=black,line width=.4pt},
	chosen/.style={draw=blue!70!black,line width=1pt},
	escape/.style={draw=red!75!black,line width=1pt},
	dot/.style={circle,draw=black,fill=white,inner sep=1.5pt},
	cdot/.style={circle,draw=blue!70!black,fill=blue!70!black,inner sep=1.7pt},
	rdot/.style={circle,draw=red!75!black,fill=red!75!black,inner sep=1.7pt}
	]
	\node[font=\bfseries] at (-4.2,.55) {$C(f,2)$};
	\coordinate (l0) at (-4.2,0);
	\coordinate (l1) at (-3.8,-.8);
	\coordinate (l2) at (-4.35,-1.6);
	\coordinate (l3) at (-3.95,-2.4);
	\coordinate (l4) at (-4.5,-3.2);
	\draw[chosen] (l0)--(l1)--(l2)--(l3)--(l4);
	\draw[treeedge] (l0)--(-5.35,-.8);
	\draw[treeedge] (l1)--(-2.80,-1.6);
	\draw[treeedge] (l2)--(-5.5,-2.4);
	\draw[treeedge] (l3)--(-2.8,-3.2);
	\node[cdot,label=left:$x_*$] at (l0) {};
	\node[cdot,label=right:$f\upharpoonright1$] at (l1) {};
	\node[cdot,label=left:$f\upharpoonright2$] at (l2) {};
	\node[cdot,label=right:$f\upharpoonright3$] at (l3) {};
	\node[cdot,label=left:$f\upharpoonright4$] at (l4) {};
	\node at (-4.5,-3.75) {$\vdots$};
	\node[align=center,text width=5.0cm] at (-4.2,-4.6)
	{the root together with a tail of the branch $f$};

	\node at (3.1,.55) {the diagonal branch $g$};
	\coordinate (r0) at (3.1,0);
	\coordinate (r1) at (2.7,-.8);
	\coordinate (r2) at (3.2,-1.6);
	\coordinate (r3) at (2.75,-2.4);
	\coordinate (r4) at (3.25,-3.2);
	\draw[escape] (r0)--(r1)--(r2)--(r3)--(r4);
	\draw[treeedge,line width=.75pt] (r0)--(4.2,-.8)--(4.8,-1.6);
	\draw[treeedge,line width=.75pt] (r1)--(1.55,-1.6)--(1.05,-2.4);
	\draw[treeedge,line width=.75pt] (r2)--(4.35,-2.4)--(4.85,-3.2);
	\draw[treeedge,dashed] (r3)--(1.6,-3.2);
	\node[rdot,label=left:$x_*$] at (r0) {};
	\node[rdot] at (r1) {};
	\node[rdot] at (r2) {};
	\node[rdot] at (r3) {};
	\node[rdot] at (r4) {};
	\node at (3.25,-3.75) {$\vdots$};
	\node[anchor=west] at (4.95,-1.05) {$B_{f_0}$};
	\node[anchor=east] at (1.10,-1.80) {$B_{f_1}$};
	\node[anchor=west] at (5.05,-2.65) {$B_{f_2}$};
	\node[align=center,text width=5.2cm] at (3.1,-4.6)
	{each selected branch\\eventually separates from $g$};
\end{tikzpicture}
	\caption{The filters $\mathcal V_f$ contain the root and a tail of
		the branch $f$.  A countable family of selected branches omits some
		$g\in2^\omega$, and no selected $B_f$ belongs to $\mathcal V_g$.}
	\label{fig:branches}
\end{figure}

If Alice repeats $\mathcal U_*$, Bob selects only countably many branches,
even when finite selections are allowed.  Choose $g\in2^\omega$ different
from all of them.  Distinct branches eventually separate, so
$B_f\notin\mathcal V_g$ whenever $f\ne g$ (see
Figure~\ref{fig:branches}).  Thus the total selection is not an $L$-cover and
\[
 \Alice\wins\mathsf G_1(\mathcal C_L,\mathcal C_L)
 \quad\text{and}\quad
 \Alice\wins\mathsf G_{\fin}(\mathcal C_L,\mathcal C_L).
\]

On the other hand, enumerate $X$ as $\{s_n:n<\omega\}$.  In inning $n$,
Bob chooses from Alice's $L$-cover a set containing $s_n$.  These choices
cover $X$; hence
\[
 \Bob\wins\mathsf G_1(\mathcal C_L,\Cov(X))
 \quad\text{and}\quad
 \Bob\wins\mathsf G_{\fin}(\mathcal C_L,\Cov(X)).
\]
This convergence is Hausdorff, in the sense that every filter has at most one
limit point.  It is not pretopological: the filters $\mathcal V_f$ are pairwise
incomparable minimal filters converging to the root, and hence there is no
least filter converging to $x_*$.  Its regularity is verified in
Remark~\ref{rem:branch-regular}.
\end{example}

Example~\ref{ex:branches} suggests the natural first question: which classical
game theorems remain valid when open covers are replaced by $L$-covers?

\section{Topological representations}
\label{sec:representations}

We now represent both games on a canonical topological space associated with
$(X,L)$.  Under this representation,
$\mathsf G_\nu(\mathcal C_L,\mathcal C_L)$ becomes the classical covering
game, whereas $\mathsf G_\nu(\mathcal C_L,\Cov(X))$ becomes the corresponding
weak covering game.  This provides the main route by which classical game
theorems enter the theory.

For a subset $A\subseteq X$, put
\[
 A^\bullet=\{\mathcal F\in\Sigma_L:A\in\mathcal F\}
\]
Since $X^\bullet=\Sigma_L$ and
$A^\bullet\cap B^\bullet=(A\cap B)^\bullet$, these sets form a base for
a topology $\tau_L^\bullet$ on $\Sigma_L$.  Denote the resulting topological
space by
$
 Y_L=(\Sigma_L,\tau_L^\bullet),
$
and put $\Delta_X=\{\dot x:x\in X\}$.  The space $Y_L$ is appropriate for
representing $L$-covers as open covers.

\begin{proposition}\label{prop:filter-dictionary}
Let $(X,L)$ be a convergence space and $\mathcal U$ a family of subsets of
$X$.
\begin{enumerate}[(i)]
\item $\mathcal U$ is an $L$-cover if and only if
$\{A^\bullet:A\in\mathcal U\}$ covers $Y_L$.
\item The set $\Delta_X$ is dense in $Y_L$, and each of its points is
isolated.  Moreover,
\[
 \bigcup\mathcal U=X
 \quad\Longleftrightarrow\quad
 \bigcup_{A\in\mathcal U}A^\bullet
 \text{ is dense in }Y_L.
\]
\end{enumerate}
\end{proposition}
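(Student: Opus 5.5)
Both parts are direct unwindings of the definitions of $A^\bullet$ and of the base of $\tau_L^\bullet$, so the plan is to verify each assertion by elementwise bookkeeping.

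For (i), I would note that a point $\mathcal F\in Y_L$ lies in $\bigcup_{A\in\mathcal U}A^\bullet$ exactly when some $A\in\mathcal U$ belongs to $\mathcal F$, i.e.\ exactly when $\mathcal U\cap\mathcal F\ne\varnothing$. Since the points of $Y_L$ are precisely the filters in $\Sigma_L$, the family $\{A^\bullet:A\in\mathcal U\}$ covers $Y_L$ if and only if $\mathcal U$ meets every convergent filter, which is the definition of an $L$-cover. No topology is needed here.

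For (ii), I would first show that each $\dot x$ is isolated. It belongs to $\Sigma_L$ because $x\in L(\dot x)$. Moreover $\{x\}^\bullet$ is a basic open set consisting of the convergent filters containing $\{x\}$, and since filters are proper the only such filter is $\dot x$. Hence $\{x\}^\bullet=\{\dot x\}$. For density, I would take a nonempty basic open set $A^\bullet$ and pick some $\mathcal F\in A^\bullet$. Then $A\in\mathcal F$ and $A\ne\varnothing$ because $\mathcal F$ is proper, so there is a point $x\in A$, and then $\dot x\in A^\bullet$. Thus every nonempty basic open set meets $\Delta_X$.

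For the displayed equivalence, the forward direction goes as follows. If $\bigcup\mathcal U=X$, then for every $x$ there is $A\in\mathcal U$ with $x\in A$, so $\dot x\in A^\bullet$. Hence the union $\bigcup_{A\in\mathcal U}A^\bullet$ contains $\Delta_X$, and a set containing a dense set is dense. For the converse, I would use the isolation of $\dot x$. If $\bigcup_{A\in\mathcal U}A^\bullet$ is dense, it must meet the open set $\{\dot x\}$, so $\dot x\in A^\bullet$ for some $A\in\mathcal U$. That means $A\in\dot x$, i.e.\ $x\in A$, so $\mathcal U$ covers $X$.

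I expect no real obstacle. The only points requiring care are the use of properness of filters, which gives both $\{x\}^\bullet=\{\dot x\}$ and the nonemptiness of $A$, and the fact that density only needs to be checked against basic open sets.
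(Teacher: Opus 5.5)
Your proposal is correct and follows essentially the same route as the paper. Part (i) is a restatement of the definition, and in part (ii) you show $\{x\}^\bullet=\{\dot x\}$, note that $\dot x\in A^\bullet$ whenever $x\in A$, and use isolation for the converse of the density equivalence; your explicit appeals to properness of filters only spell out details the paper leaves implicit.
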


\begin{proof}
Assertion~(i) is the definition of an $L$-cover written in terms of
$A^\bullet$.  If $A^\bullet\ne\varnothing$, then $A\ne\varnothing$ and
$\dot x\in A^\bullet$ for each $x\in A$.  Hence every nonempty basic open
set meets $\Delta_X$.  Also
$\{x\}^\bullet=\{\dot x\}$, so these points are isolated.

An open subset of $Y_L$ is dense if and only if it contains $\Delta_X$:
necessity follows from the isolated singletons, and sufficiency from the
density of $\Delta_X$.  Finally,
$\dot x\in\bigcup_{A\in\mathcal U}A^\bullet$ exactly when
$x\in\bigcup\mathcal U$.  This proves~(ii).
\end{proof}

For a topological space $Y$, denote by $\mathcal O(Y)$ the family of its open
covers and by $\mathcal D(Y)$ the collection of families of open sets with
dense union.  Thus
$\mathsf G_\nu(\mathcal O(Y),\mathcal D(Y))$ is the weak Menger or weak
Rothberger game, according as $\nu=\fin$ or $\nu=1$.

In Theorem~\ref{thm:filter-games} we use the standard reduction to basic open
covers.  Given an open cover, take a basic refinement and retain, for each
basic set, one member of the original cover that contains it.  Replacing
selected basic sets by these containing members preserves both being a cover
and having dense union.  This transfers strategies, as well as selections, in
both directions.

\begin{theorem}\label{thm:filter-games}
Let $(X,L)$ be a convergence space and $\nu\in\{1,\fin\}$.
\begin{enumerate}[(i)]
\item The games
\[
 \mathsf G_\nu(\mathcal C_L,\mathcal C_L)
 \quad\text{and}\quad
 \mathsf G_\nu(\mathcal O(Y_L),\mathcal O(Y_L))
\]
are strategically equivalent.
\item The games
\[
 \mathsf G_\nu(\mathcal C_L,\Cov(X))
 \quad\text{and}\quad
 \mathsf G_\nu(\mathcal O(Y_L),\mathcal D(Y_L))
\]
are strategically equivalent.
\end{enumerate}
The same translations identify the corresponding selection principles.
\end{theorem}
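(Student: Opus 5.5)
The plan is to use the dictionary $A\mapsto A^\bullet$ from Proposition~\ref{prop:filter-dictionary} together with the reduction to basic open covers described just before the theorem. This yields four strategy translations, as in the proof of Proposition~\ref{prop:topological-games}. For a family $\mathcal U\subseteq\mathcal P(X)$ write $\mathcal U^\bullet=\{A^\bullet:A\in\mathcal U\}$. By Proposition~\ref{prop:filter-dictionary}(i), $\mathcal U\in\mathcal C_L$ exactly when $\mathcal U^\bullet\in\mathcal O(Y_L)$. By (ii), $\bigcup\mathcal U=X$ exactly when $\mathcal U^\bullet\in\mathcal D(Y_L)$, since $\bigcup\mathcal U^\bullet$ is open. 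Consequently both winning conditions in the convergence games translate verbatim into the corresponding winning conditions on $Y_L$. Moreover, $L$-covers correspond exactly to basic open covers of $Y_L$.

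First, translate Bob's strategies. Let $\sigma$ be winning in the game on $Y_L$. When Alice plays $\mathcal U\in\mathcal C_L$, Bob feeds $\mathcal U^\bullet$ to $\sigma$, which is legal by (i). Since $A\mapsto A^\bullet$ need not be injective, he fixes, for each basic set in $\mathcal U^\bullet$, one $A\in\mathcal U$ producing it. He then returns the chosen preimages of $\sigma$'s selections; there is one such preimage for $\nu=1$ and finitely many for $\nu=\fin$. The images of his total selection are exactly $\sigma$'s total selection, so (i) or (ii) gives a win.

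Conversely, suppose $\sigma$ is winning for Bob in a convergence game. When Alice plays an open cover $\mathcal W$ of $Y_L$, Bob applies the reduction: he takes the family $\mathcal U=\{A\subseteq X: A^\bullet\subseteq W\text{ for some }W\in\mathcal W\}$. This $\mathcal U$ is an $L$-cover by (i), and for each $A\in\mathcal U$ he fixes some $W_A\in\mathcal W$ containing $A^\bullet$. He answers with the $W_A$ for the sets $A$ that $\sigma$ selects. A cover of $Y_L$, or a dense union, by the sets $A^\bullet$ persists when they are enlarged to the $W_A$.

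Alice's strategies are handled symmetrically. A winning strategy of hers on $Y_L$ is converted by reducing each prescribed open cover to $\mathcal U$ as above and reporting Bob's choices $A$ as $W_A$. If the resulting $W_A$ fail to cover, or fail to have dense union, then so do the smaller $A^\bullet$; hence by (i) and (ii) Bob's selection fails in the convergence game. A winning strategy of hers in a convergence game is converted by playing $\mathcal U^\bullet$ and pulling Bob's basic-set selections back through the fixed preimages. The selection principles follow by applying the same maps to fixed sequences instead of strategies.

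The main obstacle is bookkeeping rather than mathematics. Each translation must be history-preserving, so the choices of preimages and of the sets $W_A$ have to be fixed as functions of the current move, possibly using the axiom of choice. The inequality direction must also be checked in each case: enlarging basic sets to members of $\mathcal W$ preserves being a cover and having dense union, while a failure of the $W_A$ implies a failure of the $A^\bullet$.
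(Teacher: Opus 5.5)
Your proposal is correct and follows the paper's proof. It uses the dictionary $A\mapsto A^\bullet$ of Proposition~\ref{prop:filter-dictionary} to translate both winning conditions, and the reduction to basic open covers to handle arbitrary open covers of $Y_L$; you spell out the four strategy translations, including the fixed preimages and the sets $W_A$, which the paper compresses into ``this translation preserves every finite history.''
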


\begin{proof}
Regard the basic open cover $\{A^\bullet:A\in\mathcal U\}$ as indexed by the
members $A$ of the $L$-cover $\mathcal U$, even when the map
$A\mapsto A^\bullet$ is not injective.  By
Proposition~\ref{prop:filter-dictionary}, the selected members of
$\mathcal U$ form an $L$-cover exactly when the associated basic open sets
cover $Y_L$, and they cover $X$ exactly when those basic open sets have dense
union.  This translation
preserves every finite history.  The reduction to basic covers completes the
proof.
\end{proof}

The classical characterizations now give the expected starting point for the
theory on convergence spaces.

\begin{corollary}\label{cor:classical-transfer}
For every convergence space $(X,L)$:
\begin{enumerate}[(i)]
\item $\mathsf S_{\fin}(\mathcal C_L,\mathcal C_L)$ holds if and only if
$\Alice\notwins
 \mathsf G_{\fin}(\mathcal C_L,\mathcal C_L)$;
\item $\mathsf S_1(\mathcal C_L,\mathcal C_L)$ holds if and only if
$\Alice\notwins
 \mathsf G_1(\mathcal C_L,\mathcal C_L)$.
\end{enumerate}
\end{corollary}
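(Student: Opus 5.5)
The plan is to transfer the classical theorems through Theorem~\ref{thm:filter-games}(i). By that theorem, applied with $\nu=\fin$ and $\nu=1$, the game $\mathsf G_\nu(\mathcal C_L,\mathcal C_L)$ is strategically equivalent to $\mathsf G_\nu(\mathcal O(Y_L),\mathcal O(Y_L))$. Moreover, the same translations identify $\mathsf S_\nu(\mathcal C_L,\mathcal C_L)$ with $\mathsf S_\nu(\mathcal O(Y_L),\mathcal O(Y_L))$. Hence each assertion reduces to the corresponding classical statement on the topological space $Y_L$:
\begin{itemize}
\item for (i), Hurewicz's theorem that a topological space is Menger if and only if Alice has no winning strategy in the Menger game \cite{Hurewicz1926};
\item for (ii), Pawlikowski's theorem for the Rothberger property and game \cite{Pawlikowski1994}.
\end{itemize}

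Concretely, the chain for (i) is
\[
 \mathsf S_{\fin}(\mathcal C_L,\mathcal C_L)
 \iff \mathsf S_{\fin}(\mathcal O(Y_L),\mathcal O(Y_L))
 \iff \Alice\notwins\mathsf G_{\fin}(\mathcal O(Y_L),\mathcal O(Y_L))
 \iff \Alice\notwins\mathsf G_{\fin}(\mathcal C_L,\mathcal C_L).
\]
The outer equivalences come from Theorem~\ref{thm:filter-games}(i), and the middle one is Hurewicz's theorem. Part (ii) is identical with $\nu=1$ and Pawlikowski's theorem in the middle. The implication from right to left is in any case the general fact recorded in Section~\ref{sec:games}, so only the forward direction actually needs the classical theorems.

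The one point to check is that the classical theorems apply to $Y_L$ without separation axioms. $Y_L$ need not be $T_1$, regular, or even $T_0$, since distinct filters may contain the same sets of the form $A^\bullet$ only in degenerate ways. I will note that the Hurewicz and Pawlikowski proofs are purely combinatorial on open covers and use no separation axioms. They also tolerate covers containing the whole space, consistent with the convention adopted for $\mathcal O(X,\tau)$.

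If a version requiring $T_0$ were needed, I would pass to the $T_0$ quotient of $Y_L$. This quotient has the same lattice of open sets, so open covers, selection principles and games are literally unchanged. I expect this hypothesis check to be the only real obstacle; the rest is a direct composition of the earlier results.
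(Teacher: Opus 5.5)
Your proposal is correct and is exactly the paper's argument: Theorem~\ref{thm:filter-games}(i) transfers both the selection principle and the game to $Y_L$, where the theorems of Hurewicz and Pawlikowski apply. One small correction to your side remark: $Y_L$ is always $T_0$, since if $A$ belongs to one convergent filter but not another then $A^\bullet$ separates them, so the $T_0$-quotient detour is unnecessary (although $Y_L$ typically fails to be $T_1$).
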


\begin{proof}
Apply Theorem~\ref{thm:filter-games}(i) and the theorems of Hurewicz and
Pawlikowski to $Y_L$.
\end{proof}

Part~(ii) of Theorem~\ref{thm:filter-games} identifies
$\mathsf G_\nu(\mathcal C_L,\Cov(X))$ with the weak topological games.  This
representation is unconditional.  Additional hypotheses enter only when one
asks whether the corresponding weak selection principle is characterized by
the nonexistence of a winning strategy for Alice.  By the general implication
noted in Section~\ref{sec:games}, only the converse direction requires proof.

The following transfer follows from the weak-game theorems of Babinkostova,
Pansera, and Scheepers~\cite{BabinkostovaPanseraScheepers2012}.  Their paper
adopts a standing $T_1$ convention, but the two proofs used below invoke no
separation axiom.

\begin{theorem}
\label{thm:weak-transfer}
Let $(X,L)$ be a convergence space.
\begin{enumerate}[(i)]
\item If $\mathsf S_{\fin}(\mathcal C_L,\mathcal C_L)$ holds, then
\[
 \mathsf S_1(\mathcal C_L,\Cov(X))
 \quad\Longleftrightarrow\quad
 \Alice\notwins\mathsf G_1(\mathcal C_L,\Cov(X)).
\]
\item If every $L$-cover contains a countable subfamily that is still an
$L$-cover, then
\[
 \mathsf S_{\fin}(\mathcal C_L,\Cov(X))
 \quad\Longleftrightarrow\quad
 \Alice\notwins\mathsf G_{\fin}(\mathcal C_L,\Cov(X)).
\]
\end{enumerate}
\end{theorem}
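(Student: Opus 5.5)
By Theorem~\ref{thm:filter-games}(ii), the game $\mathsf G_\nu(\mathcal C_L,\Cov(X))$ is strategically equivalent to the weak game $\mathsf G_\nu(\mathcal O(Y_L),\mathcal D(Y_L))$. The same translations identify $\mathsf S_\nu(\mathcal C_L,\Cov(X))$ with $\mathsf S_\nu(\mathcal O(Y_L),\mathcal D(Y_L))$. Likewise, Theorem~\ref{thm:filter-games}(i) identifies $\mathsf S_{\fin}(\mathcal C_L,\mathcal C_L)$ with the Menger property $\mathsf S_{\fin}(\mathcal O(Y_L),\mathcal O(Y_L))$ of $Y_L$. By Proposition~\ref{prop:filter-dictionary}(i), the hypothesis of~(ii) says exactly that every basic open cover $\{A^\bullet:A\in\mathcal U\}$ of $Y_L$ has a countable subcover. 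The reduction to basic covers, which gives a basic refinement and then retains one containing member for each basic set, upgrades this to the statement that $Y_L$ is Lindel\"of. The forward implications in both parts are the general implication from Section~\ref{sec:games}. The plan is therefore to translate everything to $Y_L$ and quote the Babinkostova--Pansera--Scheepers theorems: a Menger space satisfies $\mathsf S_1(\mathcal O,\mathcal D)\Leftrightarrow\Alice\notwins\mathsf G_1(\mathcal O,\mathcal D)$, and a Lindel\"of space satisfies $\mathsf S_{\fin}(\mathcal O,\mathcal D)\Leftrightarrow\Alice\notwins\mathsf G_{\fin}(\mathcal O,\mathcal D)$.

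The main obstacle is justifying that those proofs need no $T_1$ hypothesis, because $Y_L$ is typically not $T_1$. For instance, $\mathcal V_f$ lies in every basic open set containing any coarser convergent filter. I will therefore sketch the two arguments to check they are purely combinatorial.

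For~(ii), fix a strategy $\sigma$ for Alice. The Lindel\"of hypothesis lets us replace each of Alice's moves by a countable subcover; the subfamily is still a cover, and Bob's selections from it remain legal. The finite selections at each node of the countable tree of positions can then be enumerated. One then runs the standard diagonal argument: apply $\mathsf S_{\fin}(\mathcal O,\mathcal D)$ to the countably many sequences of covers obtained along increasing finite unions, so that for each finite history Bob's responses are unions of finitely many members. This builds a single play in which the selected sets have dense union. Density is a union condition, so no point separation is used.

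For~(i), the Menger property lets us apply Hurewicz's theorem, Corollary~\ref{cor:classical-transfer}, to $\sigma$. It yields a play in which Bob's finite selections cover $Y_L$. Then $\mathsf S_1(\mathcal O,\mathcal D)$, applied to the countably many covers produced along that play and its one-point refinements, as in Pawlikowski's argument, converts the finite selections into single ones whose union is still dense. Again, only coverings and density are used.

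I expect the delicate point to be this bookkeeping in~(i): merging finite selections into single choices while keeping the union dense. In the worst case I would reproduce the Babinkostova--Pansera--Scheepers argument verbatim on $Y_L$, checking each step. Density being witnessed by the isolated points $\dot x$ (Proposition~\ref{prop:filter-dictionary}(ii)) should simplify this, since dense union just means covering $\Delta_X$.
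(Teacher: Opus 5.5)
Your proposal is correct and is essentially the paper's proof. Theorem~\ref{thm:filter-games} turns the hypotheses into Mengerness, respectively Lindel\"ofness, of $Y_L$ (the latter via basic covers, as you describe), and the Babinkostova--Pansera--Scheepers weak Rothberger and weak Menger game theorems are then applied on $Y_L$; the paper simply asserts that their proofs use no separation axiom, so your sketches are supplementary, and the one for (i) is only heuristic, since a $\mathsf G_1$-strategy cannot literally be fed to Hurewicz's theorem. One small slip: the direction supplied for free by the general implication of Section~\ref{sec:games} is $\Leftarrow$, not the forward one, and the forward direction is exactly the part taken from those theorems.
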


\begin{proof}
By Theorem~\ref{thm:filter-games}, the hypothesis in (i) says that $Y_L$ is
Menger.  Apply the weak Rothberger game theorem of Babinkostova, Pansera, and
Scheepers~\cite[Theorem~11]{BabinkostovaPanseraScheepers2012}.

For (ii), the stated hypothesis makes $Y_L$ Lindel\"of.  Indeed, a basic
open cover corresponds to an $L$-cover by
Proposition~\ref{prop:filter-dictionary}(i) and therefore has a countable
subcover.  An arbitrary open cover has a basic refinement.  The weak Menger
game theorem in
\cite[Theorem~28]{BabinkostovaPanseraScheepers2012} now applies.
\end{proof}

The countability assumption in part~(ii) can be localized to the particular
strategy under consideration.  This is useful when the representing space is
not Lindel\"of.

\begin{proposition}\label{prop:strategywise-weak-menger}
Assume $\mathsf S_{\fin}(\mathcal C_L,\Cov(X))$.  Let $\sigma$ be a
strategy for Alice in
$\mathsf G_{\fin}(\mathcal C_L,\Cov(X))$.  Suppose that, after every
finite position compatible with $\sigma$, the $L$-cover prescribed by
$\sigma$ contains a countable $L$-subcover.  Then $\sigma$ is not winning.
\end{proposition}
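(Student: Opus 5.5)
Following the proof of the weak Menger game theorem \cite[Theorem~28]{BabinkostovaPanseraScheepers2012}, I will run the argument in $Y_L$ (via Theorem~\ref{thm:filter-games}(ii)) but use countability only at positions compatible with $\sigma$. The key step is to prune $\sigma$ to a countably branching tree. For every finite position $s$ compatible with $\sigma$, fix a countable $L$-subcover $\mathcal W_s\subseteq\sigma(s)$; this is exactly what the hypothesis provides. If Bob always answers with finite subfamilies of the $\mathcal W_s$, only countably many finite positions arise. Index them as $\langle s_t:t\in\omega^{<\omega}\rangle$, one for each sequence of finite choices: $s_{\langle\rangle}=\langle\rangle$ and $s_{t\frown k}$ is $s_t$ followed by Bob's $k$th finite subset of $\mathcal W_{s_t}$. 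To make the later diagonalisation monotone, enumerate the finite subsets so that unions of initial segments grow. Concretely, enumerate $\mathcal W_{s_t}=\{W_{t,j}:j<\omega\}$ and let Bob's $k$th response be $\{W_{t,j}:j\le k\}$.

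Next comes the diagonalisation. For each $t$ and $n$, put $D_{t,n}=\bigcup_{j\le n}W_{t,j}$. The sets $\{D_{t,n}:n<\omega\}$ increase and cover $X$, since $\mathcal W_{s_t}$ is an $L$-cover and hence a cover. I then need a single $f\in\omega^\omega$ such that the branch $x=\langle f(0),\ldots\rangle$ gives
\[
 \bigcup_{m}D_{x\restriction m,\,f(m)}=X,
\]
and then Bob's play along $x$ covers $X$ and defeats $\sigma$. Note that only $\mathsf S_{\fin}(\mathcal C_L,\Cov(X))$ (weak Menger in $Y_L$) is available, not a Menger property, so the selection must be applied to countably many sequences simultaneously. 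Following the standard Hurewicz/Pawlikowski trick, for each $n$ I will form the single $L$-cover
\[
 \mathcal U_n=\Bigl\{\,\bigcap_{|t|\le n,\ t\in n^{\le n}}D_{t,k}:k<\omega\Bigr\}.
\]
This is a finite intersection of increasing $L$-covers. To see it is an $L$-cover, use that $\mathcal W_{s_t}$ being an $L$-cover means each convergent filter contains some $W_{t,j}\subseteq D_{t,k}$ for $k\ge j$, and filters are closed under finite intersections. Applying $\mathsf S_{\fin}$ to $\langle\mathcal U_n\rangle$ yields $k_n$ such that the sets $E_n=\bigcap_{t\in n^{\le n}}D_{t,k_n}$ cover $X$. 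Since the sets are increasing, I may take $k_n$ monotone and $\ge n$.

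I define $f(m)=k_{m'}$ for a suitable bookkeeping index $m'$ chosen so that the branch stays inside $n^{\le n}$ at stage $n$. The main obstacle is this step: the positions $t=x\restriction m$ along the branch must be among those intersected in $E_n$ for the relevant $n$. The values $f(i)$ can exceed $n$, so $t\notin n^{\le n}$ in general. I would fix this as in Pawlikowski-style arguments by replacing the index set $n^{\le n}$ with all $t$ whose entries are bounded by $\max_{i<n}k_i$. This is still finite, because the bound is determined before $\mathcal U_n$ is formed. That requires choosing the $k_n$ recursively, and hence applying $\mathsf S_{\fin}$ to the sequence $\langle\mathcal U_n\rangle$ when $\mathcal U_n$ depends on earlier selections. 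I would resolve this by first extracting, from $\mathsf S_{\fin}$, the standard equivalent \emph{monotone} form: for increasing-cover sequences, some $g\in\omega^\omega$ works and every dominating function also works, because finite selections of increasing covers can be enlarged. The required $\mathcal U_n$ will then be defined with index bound $n$ only, and the selected $k_n$ will be dominated by a recursively built $f$ whose entries never exceed the corresponding index bounds at the stages used. Every point of $X$ lies in some $E_n$, and hence in $D_{x\restriction n,f(n)}$, so Bob's moves along $x$ cover $X$ and $\sigma$ is not winning.
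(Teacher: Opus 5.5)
\paragraph{There is a genuine gap in the final diagonalization.} This is the step you flag as the main obstacle, and your proposed resolution does not close it. The reduction to the tree indexed by $\omega^{<\omega}$, the partial unions $D_{t,k}$, and the check that each $\mathcal U_n$ is an $L$-cover are fine. The problem comes after the selection. With $\mathcal U_n$ built on the fixed node set $n^{\le n}$, your last sentence needs $f\upharpoonright n\in n^{\le n}$ and $f(n)\ge k_n$. Since you do not control which $E_n$ contains a given point, this is needed for every $n$. Applying the first requirement at $n+1$ gives $f(n)\le n$, so $k_n\le n$ for all $n$. Nothing in $\mathsf S_{\fin}(\mathcal C_L,\Cov(X))$ bounds the selected indices like this, and you even normalized $k_n\ge n$.

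Your ``monotone form'' points the wrong way: it lets you enlarge $f$, whereas the node bounds force $f$ to stay small. The other variants you mention fail for related reasons. Any node bound fixed before the selection meets the same escalation: handling stage $n$ puts a value $\ge k_n$ on the branch, and the node sets of later stages must then admit it. The recursive bound $\max_{i<n}k_i$ makes $\mathcal U_n$ depend on earlier selections, and the selection principle cannot be applied to such an adaptive sequence. Using only a subsequence of stages loses the points lying only in the unused $E_n$.

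\paragraph{The missing idea.} Let the node set grow with the index $k$, while exempting nodes that a point has already escaped. Put
\[
 V_{n,k}=\bigcap_{t\in k^{\le n}}\Bigl(D_{t,k}\cup\bigcup_{i<|t|}D_{t\upharpoonright i,\,t(i)}\Bigr),
\]
where $k^{\le n}$ is the finite set of sequences of length at most $n$ with entries below $k$.

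First, each $\{V_{n,k}:k<\omega\}$ is an $L$-cover. Given $\mathcal F\in\Sigma_L$, let $\psi(t)=\min\{j:D_{t,j}\in\mathcal F\}$. The tree of those $t$ with $t(i)<\psi(t\upharpoonright i)$ for all $i<|t|$ is finitely branching. Choose $k$ with $k\ge\psi(t)$ for its finitely many nodes $t$ of length at most $n$. Then every term of the intersection belongs to $\mathcal F$, so $V_{n,k}\in\mathcal F$.

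Second, a single non-adaptive application of $\mathsf S_{\fin}(\mathcal C_L,\Cov(X))$ gives finite $F_n\subseteq\omega$ with $\bigcup_n\bigcup_{k\in F_n}V_{n,k}=X$. Let $f(n)=\max F_n$, or $0$ if $F_n=\varnothing$.

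Third, Bob's play along $f$ covers $X$. Suppose $x\in V_{n,k}$ with $k\in F_n$, and Bob's moves along $f$ miss $x$ in innings $0,\dots,n$. Induction on $m\le n$ gives $f(m)<k$. Indeed, once $f\upharpoonright m\in k^{\le n}$, the escape clauses fail at $x$, so $x\in D_{f\upharpoonright m,k}$. Then $f(m)\ge k$ would put $x$ in $D_{f\upharpoonright m,f(m)}$, contradicting that $x$ is missed. At $m=n$ this contradicts $f(n)\ge k$.

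\paragraph{Comparison with the paper.} The paper does not reprove this diagonalization. It transfers $\sigma$ to $Y_L$ and observes that the proof of Theorem~28 of Babinkostova, Pansera, and Scheepers uses Lindel\"ofness only to normalize the moves of the fixed strategy. Your countable subcovers $\mathcal W_s$ and the partial unions $D_{t,k}$ already carry out that normalization.
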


\begin{proof}
Transfer $\sigma$ to the game on basic open covers of $Y_L$.  Every cover
prescribed by the transferred strategy has a countable subcover.  Such a
subcover may be enumerated and replaced by the increasing cover formed by its
finite partial unions; a choice from the latter is returned as a finite choice
from the original cover.  The proof of
\cite[Theorem~28]{BabinkostovaPanseraScheepers2012} uses Lindel\"ofness only
for this normalization of the moves of the fixed strategy.  Its remaining
diagonalization uses
$\mathsf S_{\fin}(\mathcal O(Y_L),\mathcal D(Y_L))$, which is precisely
the selection principle assumed here.
\end{proof}

Neither global hypothesis in Theorem~\ref{thm:weak-transfer} is necessary.
The next elementary observation includes Example~\ref{ex:branches}.

\begin{proposition}\label{prop:countable-cov-game}
If $X$ is nonempty and countable, then
\[
 \Bob\wins\mathsf G_1(\mathcal C_L,\Cov(X))
 \quad\text{and}\quad
 \Bob\wins\mathsf G_{\fin}(\mathcal C_L,\Cov(X)).
\]
In particular, both corresponding selection principles hold.
\end{proposition}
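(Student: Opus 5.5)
Enumerate $X$ as $\{s_n:n<\omega\}$, repeating points if $X$ is finite; this is possible because $X$ is nonempty and countable. We then define a strategy $\sigma$ for Bob in $\mathsf G_1(\mathcal C_L,\Cov(X))$ that ignores the history except for the inning number. In inning $n$, Alice plays an $L$-cover $\mathcal U_n$. Since $\mathcal C_L\subseteq\Cov(X)$, as noted in Section~\ref{sec:games} via $\dot s_n\in\Sigma_L$, the family $\mathcal U_n$ meets $\dot s_n$. So Bob may choose $U_n\in\mathcal U_n$ with $s_n\in U_n$, fixing such a choice by the axiom of choice for each possible move. The selected sets satisfy $s_n\in U_n$ for every $n$, hence $\{U_n:n<\omega\}$ covers $X$, and every play compatible with $\sigma$ is won by Bob.

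For $\mathsf G_{\fin}(\mathcal C_L,\Cov(X))$, Bob plays the singleton $\{U_n\}$ given by the same rule. This is a legal finite selection, and the union of the selections is the same cover. Equivalently, a winning strategy in the single-selection game trivially yields one in the finite-selection game.

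For the selection principles, note that if Bob has a winning strategy in $\mathsf G_\nu(\mathcal A,\mathcal B)$, then in particular Alice has none. By the general implication $\Alice\notwins\mathsf G_\nu(\mathcal A,\mathcal B)\Rightarrow\mathsf S_\nu(\mathcal A,\mathcal B)$ from Section~\ref{sec:games}, both $\mathsf S_1(\mathcal C_L,\Cov(X))$ and $\mathsf S_{\fin}(\mathcal C_L,\Cov(X))$ hold. One can also argue directly: given any sequence of $L$-covers, choose $U_n\ni s_n$ as above.

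There is no real obstacle here. The only point requiring care is that each $L$-cover actually covers $X$, which holds because principal ultrafilters converge. The nonemptiness hypothesis is needed only so that the enumeration $\langle s_n:n<\omega\rangle$ exists.
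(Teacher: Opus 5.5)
Your proof is correct and is essentially the paper's argument. You enumerate $X$ with repetitions and let Bob pick, in inning $n$, a member of Alice's $L$-cover containing the $n$th point; the same choices are legal in the finite-selection game, and your deduction of the selection principles via $\Bob\wins\mathsf G_\nu\Rightarrow\Alice\notwins\mathsf G_\nu\Rightarrow\mathsf S_\nu$ fills in the ``in particular'' that the paper leaves implicit.
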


\begin{proof}
Enumerate $X$ as $(x_n)_{n<\omega}$, with repetitions if necessary.  In
inning $n$, Bob chooses from Alice's $L$-cover a member containing $x_n$.
The resulting family covers $X$.  The same choices are legal in the
finite-selection game.
\end{proof}

\begin{remark}\label{rem:weak-boundary}
In Example~\ref{ex:branches}, the $L$-cover $\mathcal U_*$ has no countable
$L$-subcover.  Indeed, a countable subfamily omits a branch $g$, and none of
its members belongs to $\mathcal V_g$.  Thus $Y_L$ is not Lindel\"of and
hence is not Menger.  Nevertheless,
Proposition~\ref{prop:countable-cov-game} gives
$\Bob\wins\mathsf G_1(\mathcal C_L,\Cov(X))$.  Thus the hypotheses in both
parts of Theorem~\ref{thm:weak-transfer} are sufficient but not necessary.

The examples of Sakai~\cite{Sakai2014} do not contradict
Theorem~\ref{thm:weak-transfer}.  His regular Lindel\"of space that is not
weakly Menger only refutes the implication from Lindel\"ofness to weak
Mengerness.  Under CH, Sakai also constructs a Menger space on which Bob has no
winning strategy in $\mathsf G_{\fin}(\mathcal O,\mathcal D)$; this does not
show that Alice has a winning strategy.  Thus neither example supplies a
weakly Menger space with
$\Alice\wins\mathsf G_{\fin}(\mathcal O,\mathcal D)$.

Babinkostova, Pansera, and Scheepers ask whether such a topological space
exists.  Removing the hypothesis in Theorem~\ref{thm:weak-transfer}(ii) is a
more restricted issue: every representing space $Y_L$ has the dense set
$\Delta_X$ of isolated points.  A counterexample in the full topological
category would refute the convergence statement only if it can be realized,
up to the relevant games, within this class of representing spaces.
\end{remark}

\section{Compactoid and Alster-type consequences}
\label{sec:compactoid}

We now derive compactoid and Alster-type consequences from Bob's having a
winning strategy in $\mathsf G_{\fin}(\mathcal C_L,\Cov(X))$.  For
$A\subseteq X$, put
\[
 \cl_LA=\{x\in X:\mathcal F\to_Lx
        \text{ for some }\mathcal F\mathrel{\#}A\}.
\]
For a filter $\mathcal F\in\Fil(X)$, write
$\cl_L^\natural\mathcal F$ for the filter
generated by $\{\cl_LF:F\in\mathcal F\}$.  The convergence $L$ is
\emph{regular} if
\[
 \mathcal F\to_Lx
 \quad\Longrightarrow\quad
 \cl_L^\natural\mathcal F\to_Lx.
\]
This is the notion of regularity introduced by Cook and Fischer.  In the
topological case, it is the usual notion of regularity, without any separation
assumption~\cite{CookFischer1967}.

\begin{remark}\label{rem:branch-regular}
The convergence in Example~\ref{ex:branches} is regular.
For a non-root node $s$, one has $s\in\cl_LA$ exactly when $s\in A$.
At the root,
\[
 x_*\in\cl_LA
 \quad\Longleftrightarrow\quad
 \text{there is }f\in2^\omega\text{ such that }
 A\cap C(f,n)\ne\varnothing\text{ for every }n<\omega.
\]
It follows that every $C(f,n)$ is $L$-closed.  If
$\mathcal F\to_Lx_*$ and $\mathcal V_f\subseteq\mathcal F$, then
$\cl_L^\natural\mathcal F$ still contains $\mathcal V_f$, and hence
converges to $x_*$.  At a non-root node, the only convergent filter is
$\dot s$ and $\cl_L\{s\}=\{s\}$, so its closure filter is again $\dot s$.
\end{remark}

The \emph{adherence} of a filter $\mathcal H$ is
\[
 \adh_L\mathcal H
   =\bigcup\{L(\mathcal F):\mathcal F\mathrel{\#}\mathcal H\}
\]
A filter $\mathcal H$ is \emph{$L$-compactoid} if every ultrafilter finer
than $\mathcal H$ has a nonempty $L$-limit set.  A set $K\subseteq X$ is
$L$-compactoid if every ultrafilter $\mathfrak p$ with $K\in\mathfrak p$ has
a nonempty $L$-limit set, and it is \emph{$L$-compact} if every such
ultrafilter has an $L$-limit in $K$.  Finally, put
\[
 \Cl_L\mathcal H=\bigcap_{H\in\mathcal H}\cl_LH
\]

Proposition~\ref{prop:compactoid-hull} is the convergence-theoretic compactoid
hull needed in the sequel.  We include the passage through the pseudotopological
modification because the results in \cite{DoleckiGrecoLechicki1985} are
stated for pseudotopological spaces.

\begin{proposition}\label{prop:compactoid-hull}
Let $(X,L)$ be a regular convergence space and let $\mathcal H$ be an
$L$-compactoid filter.  Then
$\Cl_L\mathcal H$ is an $L$-compactoid subset of $X$ and
\[
 \adh_L\mathcal H\subseteq\Cl_L\mathcal H.
\]
If $(X,L)$ is also pretopological, then $\Cl_L\mathcal H$ is $L$-compact.
\end{proposition}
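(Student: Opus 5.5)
The inclusion $\adh_L\mathcal H\subseteq\Cl_L\mathcal H$ should follow straight from the definitions, and I would prove it first. If $\mathcal F\mathrel{\#}\mathcal H$ and $\mathcal F\to_Lx$, then for each $H\in\mathcal H$ the filter $\mathcal F$ meets every member of $\{H\}$, so $\mathcal F\mathrel{\#}H$. Hence $x\in\cl_LH$ for every $H\in\mathcal H$, that is, $x\in\Cl_L\mathcal H$. Neither regularity nor compactoidness is used here.

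For the compactoid assertion I would pass to the pseudotopological modification $SL$: $\mathcal F\to_{SL}x$ if and only if every ultrafilter finer than $\mathcal F$ converges to $x$ in $L$. I would then check three invariances.
\begin{enumerate}[(a)]
\item Ultrafilters have the same limits under $L$ and $SL$. Hence the notions of $L$-compactoid filter, $L$-compactoid set and $L$-compact set coincide with their $SL$-versions.
\item $\cl_LA=\cl_{SL}A$ for every $A\subseteq X$. A filter meshing $A$ has an ultrafilter refinement containing $A$, and this refinement is again convergent. Consequently $\Cl_L\mathcal H=\Cl_{SL}\mathcal H$.
\item $SL$ is regular whenever $L$ is. This is the standard fact that $S$ preserves Cook--Fischer regularity. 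I would verify it using that $\cl^\natural_L$ commutes with passing to ultrafilter refinements up to refinement, together with (b).
\end{enumerate}
With these in hand, the statement for $(X,L)$ reduces to the corresponding statement for the regular pseudotopology $SL$. That statement is the result of \cite{DoleckiGrecoLechicki1985}: in a regular pseudotopological space, the closure filter $\cl^\natural\mathcal H$ of a compactoid filter is compactoid, with $\Cl\mathcal H$ as its compactoid hull.

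If I had to argue the compactoid part directly, I would proceed by contradiction. Let $\mathfrak p$ be an ultrafilter with $K=\Cl_L\mathcal H\in\mathfrak p$. Since $K\subseteq\cl_LH$ for every $H$, we get $\mathfrak p\supseteq\cl_L^\natural\mathcal H$. I would then use regularity to produce an ultrafilter $\mathfrak q\supseteq\mathcal H$ whose closure filter is coarser than $\mathfrak p$. Since $\mathcal H$ is compactoid, $\mathfrak q\to_Ly$ for some $y$, and then regularity gives $\mathfrak p\supseteq\cl^\natural_L\mathfrak q\to_Ly$, so $\mathfrak p$ converges to $y$.

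For the pretopological case, take an ultrafilter $\mathfrak p\ni K$. By the previous step $\mathfrak p\to_Ly$ for some $y$, and it remains to show $y\in K$, i.e.\ $y\in\cl_LH$ for each $H\in\mathcal H$. We know $\mathcal V_L(y)\subseteq\mathfrak p$ and $\cl_LH\in\mathfrak p$, so $\mathcal V_L(y)\mathrel{\#}\cl_LH$; this only yields $y\in\cl_L\cl_LH$, and in a pretopology $\cl_L$ need not be idempotent.

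Closing this gap is the main obstacle. I would not try to collapse $\cl_L\cl_L$ directly. Instead I would route the argument through the limit $y$ of the ultrafilter $\mathfrak q\supseteq\mathcal H$ obtained above. Regularity of a pretopology means that $\mathcal V_L(y)$ has a base of sets $\cl_LW$ with $W\in\mathcal V_L(y)$. Since $\mathfrak q\to_Ly$ and $\mathfrak q\supseteq\mathcal H$, every such $W$ belongs to $\mathfrak q$ and therefore meets each $H\in\mathcal H$. Thus $\mathcal V_L(y)\mathrel{\#}H$, which in a pretopology means exactly $y\in\cl_LH$.

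Hence the real work in both the compactoid and compact statements is arranging that the limit of $\mathfrak p$ can be taken to be the limit of some ultrafilter refining $\mathcal H$. I expect that construction of $\mathfrak q$ from $\mathfrak p$ via regularity to be the delicate step, and it is exactly what the cited results of \cite{DoleckiGrecoLechicki1985} supply.
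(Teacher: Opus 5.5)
Your plan is sound, and your first two steps match the paper's proof: the inclusion $\adh_L\mathcal H\subseteq\Cl_L\mathcal H$, and the passage to $SL$ via (a) and (b). The paper, however, never needs your step (c). It only notes that for an $SL$-convergent ultrafilter $\mathfrak p$ the filter $\cl_L^\natural\mathfrak p$ is $L$-convergent, hence $SL$-compactoid. That is the subregularity hypothesis under which Dolecki, Greco and Lechicki show that the adherence of a compactoid filter is compactoid. For compactness the paper uses $\Cl_L\mathcal H=\adh_L\mathcal H$ in a pretopology together with their Corollary~4.13.

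Your (c), your direct argument and your compactness argument all rest on one step you leave open: the existence of $\mathfrak q$. You call it delicate and attribute it to regularity, but it is elementary and regularity-free.
\begin{itemize}
\item A filter meshing $A\cup B$ meshes $A$ or $B$, so $\cl_L(A\cup B)=\cl_LA\cup\cl_LB$.
\item Hence, for an ultrafilter $\mathfrak p\supseteq\cl_L^\natural\mathcal H$, the family $\mathcal I=\{A\subseteq X:\cl_LA\notin\mathfrak p\}$ is an ideal disjoint from $\mathcal H$.
\item The sets $H\setminus I$ with $H\in\mathcal H$, $I\in\mathcal I$ therefore form a filter base.
\item Any ultrafilter $\mathfrak q$ containing them satisfies $\mathfrak q\supseteq\mathcal H$ and $\cl_L^\natural\mathfrak q\subseteq\mathfrak p$.
\end{itemize}
The same argument with a convergent $\mathcal F$ in place of $\mathcal H$ proves your (c). Regularity is used only afterwards, to get $\mathfrak p\supseteq\cl_L^\natural\mathfrak q\to_Ly$.

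Once this is supplied, your direct argument is complete and avoids the citation altogether. It also proves more than the statement. The limit $y$ of $\mathfrak q$ lies in $\adh_L\mathcal H\subseteq\Cl_L\mathcal H$ simply because $\mathfrak q\to_Ly$ and $\mathfrak q\supseteq\mathcal H$. So every ultrafilter containing $\Cl_L\mathcal H$ has an $L$-limit in $\Cl_L\mathcal H$, i.e.\ $\Cl_L\mathcal H$ is $L$-compact for every regular convergence, pretopological or not.

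Your worry about $\cl_L\cl_L$ and the detour through a base of closures in $\mathcal V_L(y)$ are therefore unnecessary. That detour does not really use regularity either: the sets $W$ lie in $\mathfrak q$ just because $\mathcal V_L(y)\subseteq\mathfrak q$.

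In short, the paper's proof is shorter on the page, but it outsources its content to the theory of subregular pseudotopologies and needs the pretopological identification $\Cl_L\mathcal H=\adh_L\mathcal H$ for compactness. Your route, with the ideal argument filled in, is self-contained and drops the pretopological hypothesis.
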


\begin{proof}
Let $SL$ be the pseudotopological modification of $L$, defined by
\[
 x\in SL(\mathcal F)
 \quad\Longleftrightarrow\quad
 x\in L(\mathfrak p)
 \text{ for every ultrafilter }\mathfrak p\supseteq\mathcal F
\]
The convergences $L$ and $SL$ coincide on ultrafilters.  Consequently, they
have the same compactoid filters and the same closure operator.  For the
latter assertion, one extends a filter meshing with a set to an ultrafilter
that contains that set.

Let $\mathfrak p$ be an $SL$-convergent ultrafilter.  It converges in
$L$, and regularity implies that $\cl_L^\natural\mathfrak p$ converges in
$L$; in particular this closure filter is $SL$-compactoid.  Thus every
convergent ultrafilter of $SL$ is subregular, meaning here that its closure
filter is $SL$-compactoid.  By
\cite[Theorems 4.2 and 4.4, Corollary 4.5]{DoleckiGrecoLechicki1985}, the
pretopological adherence of an $SL$-compactoid filter is compactoid.  Since
the closure operators coincide, that adherence is
$\Cl_{SL}\mathcal H=\Cl_L\mathcal H$.

If $x\in\adh_L\mathcal H$, some filter converges to $x$ and meshes with
$\mathcal H$; it therefore meshes with every $H\in\mathcal H$, so
$x\in\cl_LH$ for all $H$.  This proves the displayed inclusion.  Finally,
if $L$ is pretopological, $\Cl_L\mathcal H$ is the adherence of
$\mathcal H$, and its compactness follows from
\cite[Corollary 4.13]{DoleckiGrecoLechicki1985}.
\end{proof}

Fix a strategy $\sigma$ for Bob in
$\mathsf G_{\fin}(\mathcal C_L,\Cov(X))$.  If $s$ is a finite sequence of
moves by Alice and $\mathcal U\in\mathcal C_L$, set
\[
 B(s,\mathcal U)
   =\bigcup\sigma(s\mathbin{\smallfrown}\langle\mathcal U\rangle)
 \quad\text{and}\quad
 \mathcal B_s=\{B(s,\mathcal U):\mathcal U\in\mathcal C_L\}.
\]

\begin{lemma}\label{lem:strategy-filter}
If $\mathcal B_s$ has the finite intersection property, the filter
$\mathcal H_s$ generated by $\mathcal B_s$ is $L$-compactoid.
\end{lemma}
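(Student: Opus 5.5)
We argue by contradiction.  Suppose some ultrafilter $\mathfrak p\supseteq\mathcal H_s$ has $L(\mathfrak p)=\varnothing$.  From $\mathfrak p$ we construct a single $L$-cover $\mathcal U$ such that every member of $\sigma(s\mathbin{\smallfrown}\langle\mathcal U\rangle)$ lies outside $\mathfrak p$.  Since $\sigma(s\mathbin{\smallfrown}\langle\mathcal U\rangle)$ is finite, its union $B(s,\mathcal U)$ would then also lie outside $\mathfrak p$, because an ultrafilter is prime.  But $B(s,\mathcal U)\in\mathcal B_s\subseteq\mathcal H_s\subseteq\mathfrak p$, which is the desired contradiction.

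The cover is
\[
 \mathcal U=\{U\subseteq X: U\notin\mathfrak p\}.
\]
We check that $\mathcal U$ is an $L$-cover.  Let $\mathcal F\in\Sigma_L$.  If $\mathcal F\subseteq\mathfrak p$, then $\mathfrak p$ is finer than a convergent filter, and monotonicity of $L$ gives $L(\mathfrak p)\supseteq L(\mathcal F)\ne\varnothing$, which is impossible.  Hence some $F\in\mathcal F$ satisfies $F\notin\mathfrak p$.  Then $F\in\mathcal U\cap\mathcal F$, so $\mathcal U$ meets $\mathcal F$, and $\mathcal U\in\mathcal C_L$.

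This move is legal for Alice at position $s$.  The members of $\mathcal U$ are arbitrary subsets of $X$, which the game permits since $\mathcal C_L\subseteq\mathcal P(\mathcal P(X))$.  Bob's response $\sigma(s\mathbin{\smallfrown}\langle\mathcal U\rangle)$ is a finite subfamily of $\mathcal U$, so each of its members lies outside $\mathfrak p$.  Because $\mathfrak p$ is an ultrafilter, a finite union of sets not in $\mathfrak p$ is not in $\mathfrak p$; the empty union $\varnothing$ is not in $\mathfrak p$ either.  Thus $B(s,\mathcal U)\notin\mathfrak p$, contradicting $B(s,\mathcal U)\in\mathcal H_s\subseteq\mathfrak p$.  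Therefore every ultrafilter finer than $\mathcal H_s$ converges, which is exactly $L$-compactoidness of $\mathcal H_s$.

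The only real step is the choice of the cover $\{U:U\notin\mathfrak p\}$ and the verification that it is an $L$-cover using monotonicity of $L$.  The finite intersection property is needed only so that $\mathcal H_s$ is a proper filter and the ultrafilters finer than it exist.  The argument uses neither regularity nor the assumption that $\sigma$ is winning.
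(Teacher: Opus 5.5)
Your proof is correct and follows the paper's argument: an ultrafilter $\mathfrak p\supseteq\mathcal H_s$ with $L(\mathfrak p)=\varnothing$ gives an $L$-cover with no member in $\mathfrak p$, which contradicts $B(s,\mathcal U)\in\mathfrak p$ for every $L$-cover $\mathcal U$. The only difference is cosmetic: you use the canonical cover $\{U\subseteq X:U\notin\mathfrak p\}$, whereas the paper chooses one $A_{\mathcal F}\in\mathcal F\setminus\mathfrak p$ for each $\mathcal F\in\Sigma_L$, so your version needs no choice.
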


\begin{proof}
Let $\mathfrak p\supseteq\mathcal H_s$ be an ultrafilter.  For every
$\mathcal U\in\mathcal C_L$, the finite union $B(s,\mathcal U)$ belongs
to $\mathfrak p$, so some member of
$\sigma(s\mathbin{\smallfrown}\langle\mathcal U\rangle)$ belongs to
$\mathfrak p$.
Thus every $L$-cover has a member in $\mathfrak p$.

If $L(\mathfrak p)=\varnothing$, then no convergent filter is contained in
$\mathfrak p$.  For each $\mathcal F\in\Sigma_L$, choose
$A_{\mathcal F}\in\mathcal F\setminus\mathfrak p$.  The family
$\{A_{\mathcal F}:\mathcal F\in\Sigma_L\}$ is an $L$-cover with no member
in $\mathfrak p$, a contradiction.
\end{proof}

\begin{theorem}\label{thm:alster}
Let $(X,L)$ be a regular convergence space such that
\[
 \Bob\wins\mathsf G_{\fin}(\mathcal C_L,\Cov(X))
\]
If $\mathcal W$ is a cover of $X$ by $G_\delta$-subsets of $(X,\tau_L)$ and every
$L$-compactoid subset of $X$ is contained in some member of $\mathcal W$,
then $\mathcal W$ has a countable subcover.
\end{theorem}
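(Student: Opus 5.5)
The plan is to follow the Aurichi--Dias argument, using Lemma~\ref{lem:strategy-filter} and Proposition~\ref{prop:compactoid-hull} in place of the topological compactness facts. Fix a winning strategy $\sigma$ for Bob and the sets $B(s,\mathcal U)$, $\mathcal B_s$, $\mathcal H_s$ introduced above. Suppose $\mathcal W$ is as in the statement. I will build a countable set $T$ of finite positions and, for each $s\in T$, a member $W_s\in\mathcal W$, so that $\{W_s:s\in T\}$ covers $X$. The proof is by contradiction: if some $x\in X$ avoids every $W_s$, Alice has a play against $\sigma$ whose selections miss $x$.

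The first step is a local analysis at a single position $s$, with two cases.

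\emph{Case 1: $\mathcal B_s$ fails the finite intersection property.} Then there are finitely many $L$-covers $\mathcal U_1,\dots,\mathcal U_m$ with $\bigcap_i B(s,\mathcal U_i)=\varnothing$. The finitely many successors $s\mathbin{\smallfrown}\langle\mathcal U_i\rangle$ go into $T$. Any given $x$ is then missed by at least one of Bob's responses $\sigma(s\mathbin{\smallfrown}\langle\mathcal U_i\rangle)$, and Alice follows that branch.

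\emph{Case 2: $\mathcal B_s$ has the finite intersection property.} By Lemma~\ref{lem:strategy-filter}, $\mathcal H_s$ is $L$-compactoid. By regularity and Proposition~\ref{prop:compactoid-hull}, $K_s=\Cl_L\mathcal H_s$ is an $L$-compactoid set. Choose $W_s\in\mathcal W$ with $K_s\subseteq W_s$, and write $W_s=\bigcap_k O_{s,k}$ with each $O_{s,k}$ open in $\tau_L$.

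For each $k$, Alice's move at $s$ will be the family
\[
 \mathcal V_{s,k}=\{O_{s,k}\}\cup\{X\setminus H_{\mathcal F}:\mathcal F\in\Sigma_L,\ L(\mathcal F)\not\subseteq K_s\}.
\]
Here $H_{\mathcal F}\in\mathcal H_s$ is chosen with $X\setminus H_{\mathcal F}\in\mathcal F$. Such a choice is possible because a limit point outside $\Cl_L\mathcal H_s$ forces $\mathcal F$ not to mesh with some member of $\mathcal H_s$. Filters converging into $K_s\subseteq O_{s,k}$ contain $O_{s,k}$, by the definition of $\tau_L$. Hence $\mathcal V_{s,k}\in\mathcal C_L$. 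The successors $s\mathbin{\smallfrown}\langle\mathcal V_{s,k}\rangle$, for $k<\omega$, go into $T$. Iterating from the empty position produces a countable tree $T$.

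The second step assumes $x\notin\bigcup_{s\in T}W_s$ and builds a play in $T$ along which Bob's selections never contain $x$. At a Case~1 node, Alice picks the $\mathcal U_i$ whose response misses $x$. At a Case~2 node, she picks $k$ with $x\notin O_{s,k}$, so the member $O_{s,k}$ of Bob's response cannot capture $x$. Since $\sigma$ is winning, this gives the contradiction.

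I expect the main obstacle to be Case~2, in the members $X\setminus H_{\mathcal F}$ that Bob might select. One must ensure that $x$ is not in them. The natural way is to carry along the play the invariant that $x$ lies in every member of $\mathcal H_s$, that is, $x\in B(s,\mathcal U)$ for all $\mathcal U$. Equivalently, the argument should be arranged so that $x\notin B(s,\mathcal U)$ for some $\mathcal U$ is handled by a Case~1-type countable branching. My first attempt is to enlarge $T$: for each node, add countably many auxiliary successors chosen via the $H_{\mathcal F}$'s, each being a finite intersection of sets $B(s,\mathcal U)$. Since Bob selects only finitely many $X\setminus H_{\mathcal F}$, only finitely many such $\mathcal U$ are relevant for each move. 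I will try to code these into the tree so that, whenever $x$ escapes some $B(s,\mathcal U)$, Alice can instead play that $\mathcal U$. Making this coding countable is the delicate point. If it fails, I would fall back on restricting $\mathcal H_s$ to the filter generated by the sets $B(s,\mathcal U)$ containing $x$, and run the compactoid argument relative to that filter.
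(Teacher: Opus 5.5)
There is a genuine gap, and it is the one you flag yourself. At a Case~2 node, choosing $k$ with $x\notin O_{s,k}$ does not stop $\sigma(s\mathbin{\smallfrown}\langle\mathcal V_{s,k}\rangle)$ from containing some $X\setminus H_{\mathcal F}$ with $x\in X\setminus H_{\mathcal F}$. So the play you build need not miss $x$. Neither proposed repair is carried out:
\begin{itemize}
\item The invariant ``$x\in B(s,\mathcal U)$ for all $\mathcal U$'' cannot be maintained along a play.
\item The fallback of rebuilding $\mathcal H_s$ from the sets $B(s,\mathcal U)$ that contain $x$ is not admissible. The tree $T$ and the sets $W_s$ must be fixed before $x$ is chosen; otherwise you do not obtain a single countable subfamily of $\mathcal W$ covering $X$.
\end{itemize}

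The missing idea is that $O_{s,k}\in\mathcal H_s$ for every $k$. The paper derives this from compactoidness. Suppose not, and take an ultrafilter $\mathfrak p\supseteq\mathcal H_s\cup\{X\setminus O_{s,k}\}$. By Lemma~\ref{lem:strategy-filter}, $\mathfrak p$ has a limit $y$. By Proposition~\ref{prop:compactoid-hull}, $y\in\adh_L\mathcal H_s\subseteq K_s\subseteq O_{s,k}$. Since $O_{s,k}$ is $\tau_L$-open, $O_{s,k}\in\mathfrak p$, a contradiction.

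Your own family $\mathcal V_{s,k}$ also yields this fact. If the members of Bob's response other than $O_{s,k}$ are $X\setminus H_{\mathcal F_1},\ldots,X\setminus H_{\mathcal F_m}$, then $B(s,\mathcal V_{s,k})\cap\bigcap_{j}H_{\mathcal F_j}$ is a member of $\mathcal H_s$ contained in $O_{s,k}$.

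Since $\mathcal H_s$ is generated by $\mathcal B_s$, there are finitely many $L$-covers $\mathcal U_{s,k,1},\ldots,\mathcal U_{s,k,m}$ with $\bigcap_i B(s,\mathcal U_{s,k,i})\subseteq O_{s,k}$. These, rather than $\mathcal V_{s,k}$, should be the successors of $s$. Then $x\notin O_{s,k}$ gives an $i$ with $x\notin B(s,\mathcal U_{s,k,i})$, so Bob's entire response misses $x$. Thus a Case~2 node is handled by the same finite branching as a Case~1 node, once for each of countably many $k$. No delicate coding is needed, and the tree is fixed independently of $x$.
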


\begin{proof}
Assume $X\ne\varnothing$, fix a winning strategy $\sigma$ for Bob, and
let $\mathcal W$ be as in the statement.
For a finite sequence $s$ of moves by Alice, first suppose that
$\mathcal B_s$ has the finite intersection property.  Let $\mathcal H_s$
be the generated filter and put $K_s=\Cl_L\mathcal H_s$.  By
Lemma~\ref{lem:strategy-filter} and
Proposition~\ref{prop:compactoid-hull}, $K_s$ is $L$-compactoid.  Choose
$W_s\in\mathcal W$ with $K_s\subseteq W_s$ and write
\[
 W_s=\bigcap_{n<\omega}O_{s,n},
\]
where each $O_{s,n}$ is $\tau_L$-open.

We claim that $O_{s,n}\in\mathcal H_s$ for every $n$.  Otherwise, extend
$\mathcal H_s\cup\{X\setminus O_{s,n}\}$ to an ultrafilter
$\mathfrak p$.  Compactoidness gives $x\in L(\mathfrak p)$.  Since
$\mathfrak p\mathrel{\#}\mathcal H_s$,
$x\in\adh_L\mathcal H_s\subseteq K_s\subseteq O_{s,n}$.  The definition
of $\tau_L$ then gives $O_{s,n}\in\mathfrak p$, a contradiction.

For each $n$, choose $L$-covers
$\mathcal U_{s,n,1},\ldots,\mathcal U_{s,n,k(s,n)}$ such that
\begin{equation}\label{eq:alster-node}
 \bigcap_{i=1}^{k(s,n)}B(s,\mathcal U_{s,n,i})
 \subseteq O_{s,n}
\end{equation}
Declare all these covers to be possible next moves after $s$.

If $\mathcal B_s$ does not have the finite intersection property, choose
$L$-covers $\mathcal U_{s,1},\ldots,\mathcal U_{s,m(s)}$ with
\begin{equation}\label{eq:empty-node}
 \bigcap_{i=1}^{m(s)}B(s,\mathcal U_{s,i})=\varnothing,
\end{equation}
declare them to be the possible next moves, and choose any
$W_s\in\mathcal W$.

Starting with the empty sequence and adjoining the possible next moves at
each node produces a countably branching tree $T$ of finite sequences of
Alice's moves.
Every level, and hence $T$, is countable.  We show that
$\{W_s:s\in T\}$ covers $X$.

Suppose that $x$ belongs to none of these sets.  At a node of the first
type, choose $n$ with $x\notin O_{s,n}$.  Equation
\eqref{eq:alster-node} gives an $i$ such that
$x\notin B(s,\mathcal U_{s,n,i})$.  At a node of the second type,
Equation~\eqref{eq:empty-node} gives the same conclusion for some $i$.
Choosing the corresponding successor at every stage produces a play in
which Bob follows $\sigma$ and every response misses $x$.  This contradicts
the fact that $\sigma$ is winning.
\end{proof}

For a regular topological space, compactoid subsets are exactly the subsets
with compact closure~\cite[Proposition 3.12 and Corollary
4.7]{DoleckiGrecoLechicki1985}.
Therefore Theorem~\ref{thm:alster} says precisely that $X$ is an Alster
space and recovers the result of
Aurichi and Dias~\cite[Corollary 2.13]{AurichiDias2014}.

\begin{corollary}\label{cor:pretopological-alster}
Let $(X,L)$ be a regular pretopological space.  If
$\Bob\wins\mathsf G_{\fin}(\mathcal C_L,\Cov(X))$, then every cover of $X$ by
$G_\delta$-subsets of $(X,\tau_L)$ such that every $L$-compact subset is
contained in some member of the cover has a countable subcover.
\end{corollary}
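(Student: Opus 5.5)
The plan is to derive the corollary from Theorem~\ref{thm:alster} by verifying its hypothesis on $\mathcal W$. Let $\mathcal W$ be a cover of $X$ by $G_\delta$-subsets of $(X,\tau_L)$ such that every $L$-compact subset lies in some member of $\mathcal W$. Since $(X,L)$ is regular and Bob has a winning strategy, it suffices to show that every $L$-compactoid subset of $X$ is contained in some member of $\mathcal W$; the countable subcover is then supplied by Theorem~\ref{thm:alster}.

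Rather than trying to show directly that compactoid sets are contained in compact ones, I would revisit the proof of Theorem~\ref{thm:alster}. That proof invokes the hypothesis on $\mathcal W$ only for the sets $K_s=\Cl_L\mathcal H_s$, where $\mathcal H_s$ is an $L$-compactoid filter (Lemma~\ref{lem:strategy-filter}). By the last assertion of Proposition~\ref{prop:compactoid-hull}, in the regular pretopological case these $K_s$ are $L$-compact. So I would rerun the same tree construction, choosing $W_s\supseteq K_s$ using the $L$-compact hypothesis, and keep everything else unchanged.

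The remaining ingredients are exactly as in that proof. First, $\adh_L\mathcal H_s\subseteq K_s$ holds by Proposition~\ref{prop:compactoid-hull}. Second, this inclusion gives $O_{s,n}\in\mathcal H_s$ for every $n$. Third, one chooses finitely many covers realizing \eqref{eq:alster-node}. Finally, the countable tree argument produces a play in which Bob follows $\sigma$ and every response misses a point $x$ outside $\bigcup_{s\in T} W_s$, contradicting that $\sigma$ is winning.

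The only point needing care is that the argument is really ``the proof of Theorem~\ref{thm:alster}'' and not its literal statement, since the statement asks about all compactoid sets. Alternatively, one could try to show that every $L$-compactoid set $K$ is contained in the $L$-compact set $\Cl_L(K^\uparrow)$ by applying Proposition~\ref{prop:compactoid-hull} to the principal filter $K^\uparrow$. This requires $K^\uparrow$ to be $L$-compactoid, which holds by definition, and $K\subseteq\cl_LK$, which holds because $\dot x\to_L x$. Under that route the statement of Theorem~\ref{thm:alster} applies directly: any $L$-compactoid $K$ lies in the $L$-compact set $\Cl_L(K^\uparrow)=\cl_LK$, which in turn lies in a member of $\mathcal W$. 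I would present this second, cleaner route as the main proof.
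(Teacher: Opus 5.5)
Your second route is essentially the paper's proof: apply Proposition~\ref{prop:compactoid-hull} to the principal filter $K^\uparrow$ to obtain an $L$-compact set $\Cl_L(K^\uparrow)=\cl_LK\supseteq K$, and then invoke Theorem~\ref{thm:alster} directly. The one detail you omit is the case $K=\varnothing$, where $K^\uparrow$ is not a proper filter; the paper handles it by noting that $\varnothing$ is itself $L$-compact, so the hypothesis on $\mathcal W$ already covers it.
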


\begin{proof}
The empty set is $L$-compact.  If $A\subseteq X$ is a nonempty
$L$-compactoid subset, then $A^\uparrow$ is an $L$-compactoid filter, so
Proposition~\ref{prop:compactoid-hull} shows that $\Cl_L(A^\uparrow)$ is
$L$-compact and contains $A$.  Thus the cover in
the statement also satisfies the hypothesis of Theorem~\ref{thm:alster}.
\end{proof}

We next use the countability condition introduced by
Feldman~\cite{Feldman1973}.  A subfamily $\mathcal V$ of an $L$-cover
$\mathcal U$ is a \emph{basic subcovering} if, for every
$\mathcal F\in\Sigma_L$, some finite $\mathcal E\subseteq\mathcal V$ satisfies
$\bigcup\mathcal E\in\mathcal F$.  The convergence space is
\emph{Lindel\"of} if every $L$-cover has a countable basic subcovering.  For a
topological convergence this is the usual Lindel\"of property: apply the
definition to open covers in one direction, and take interiors of members of
an $L$-cover in the other.

\begin{remark}
The convergence space of Example~\ref{ex:branches} is not Lindel\"of.  Indeed,
given countably many members $B_{f_k}$ of $\mathcal U_*$, choose
$g\notin\{f_k:k<\omega\}$.  No finite union of the selected branches contains
a tail $C(g,n)$, so no such union belongs to $\mathcal V_g$.  On the other
hand, Remark~\ref{rem:branch-regular} and Theorem~\ref{thm:alster} show that
this convergence satisfies the Alster-type conclusion of that theorem.  Thus,
in the setting of convergence spaces, this conclusion does not imply
Lindel\"ofness.
\end{remark}

For $Y\subseteq X$ and a filter $\mathcal F\in\Fil(Y)$, define 
\[
 \mathcal F^{\uparrow X}
 =\{A\subseteq X:A\cap Y\in\mathcal F\}
\]
The subspace convergence $L|_Y$ is defined by
$\mathcal F\to_{L|_Y}y$ if and only if
$\mathcal F^{\uparrow X}\to_Ly$.  We say that $(X,L)$ is
\emph{hereditarily Lindel\"of} when every subspace is Lindel\"of in
Feldman's sense.

\begin{theorem}\label{thm:decomposition}
Let $(X,L)$ be a regular, hereditarily Lindel\"of convergence space.  If
$\Bob\wins\mathsf G_{\fin}(\mathcal C_L,\Cov(X))$, then $X$ is a countable
union of $L$-compactoid subsets.
\end{theorem}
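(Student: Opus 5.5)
I plan to reuse the tree construction from the proof of Theorem~\ref{thm:alster}, but to choose the countably many Alice moves at each node with hereditary Lindel\"ofness instead of $G_\delta$ sets, and then show that $X=\bigcup_{s\in T}K_s$. Here $K_s=\Cl_L\mathcal H_s$ is $L$-compactoid by Lemma~\ref{lem:strategy-filter} and Proposition~\ref{prop:compactoid-hull}; nodes where $\mathcal B_s$ fails the finite intersection property are handled exactly as before, and contribute $K_s=\varnothing$. Fix a winning strategy $\sigma$ for Bob. The goal at a node $s$ of the first type is to find countably many $L$-covers $\mathcal U_{s,i}$ such that every $x\notin K_s$ lies outside $B(s,\mathcal U_{s,i})$ for some $i$. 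Once this is done, a point $x$ outside every $K_s$ ($s\in T$) yields, stage by stage, a play compatible with $\sigma$ in which every response misses $x$. This contradicts the assumption that $\sigma$ is winning.

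To obtain these countably many moves, consider the subspace $Y=X\setminus K_s$. For $x\in Y$ we have $x\notin\cl_LH$ for some $H\in\mathcal H_s$, and we may take $H$ to be a finite intersection of sets $B(s,\mathcal U)$. Then no filter converging to $x$ meshes with $H$, so every filter $\mathcal F\to_Lx$ contains $X\setminus H$. Hence the family
\[
 \mathcal V=\{(X\setminus H)\cap Y: H\in\mathcal H_s\}
\]
meets every $L|_Y$-convergent filter, because $\mathcal F\to_{L|_Y}y$ means $\mathcal F^{\uparrow X}\to_Ly$ and $y\in Y$. Since the complements $X\setminus H$ are upward directed, $\mathcal V$ is an $L|_Y$-cover. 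Feldman's Lindel\"ofness of $Y$ gives a countable basic subcovering. Because $\mathcal V$ is directed, we may take countably many $H_k$, each a finite intersection $\bigcap_i B(s,\mathcal U_{s,k,i})$, such that every $L|_Y$-convergent filter contains some $(X\setminus H_k)\cap Y$. In particular each $\dot y$ with $y\in Y$ converges in $L|_Y$, so $y\notin H_k$ for some $k$, and therefore $y\notin B(s,\mathcal U_{s,k,i})$ for some $i$. We declare all the $\mathcal U_{s,k,i}$ to be the successors of $s$. Note that this only needs the points of $Y$ to be covered, so one could even bypass Lindel\"ofness via principal ultrafilters. That makes me suspect the real obstacle lies elsewhere.

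The main obstacle is checking exactly this: the successor set must be countable. Covering $Y$ by the sets $X\setminus H$ is an ordinary cover, and countability of the selection is what requires Lindel\"ofness of $Y$. With only the principal filters we get a cover of $Y$ with no countability guarantee, and the subspace $L$-cover formulation is precisely what supplies the countable basic subcovering. I would also verify that $\mathcal V$ is genuinely an $L|_Y$-cover; this is where $Y\cap K_s=\varnothing$ and the inclusion $\adh_L\mathcal H_s\subseteq K_s$ enter. Regularity is used only through Proposition~\ref{prop:compactoid-hull}, to make each $K_s$ compactoid. The tree is then countably branching and countable, so $\{K_s:s\in T\}$ is the required countable family of $L$-compactoid sets.
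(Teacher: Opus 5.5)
Your proposal is correct and follows essentially the same route as the paper. The paper likewise sets $Y_s=X\setminus K_s$ and shows that $\{Y_s\setminus H:H\in\mathcal H_s\}$ is an $L|_{Y_s}$-cover using only $y\notin\Cl_L\mathcal H_s$; it then takes a countable basic subcovering by hereditary Lindel\"ofness and uses principal ultrafilters on $Y_s$ to run the tree argument of Theorem~\ref{thm:alster}. The adherence inclusion $\adh_L\mathcal H_s\subseteq K_s$ and the directedness of $\mathcal V$ are not actually needed, and, as you conclude yourself, Lindel\"ofness cannot be bypassed, since it is exactly what makes the successor set countable.
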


\begin{proof}
Assume $X\ne\varnothing$ and fix a winning strategy $\sigma$ for Bob.
For a finite sequence $s$ of moves by Alice, suppose first that
$\mathcal B_s$ has the finite intersection property.  Let $\mathcal H_s$ be
the generated filter, put $K_s=\Cl_L\mathcal H_s$, and let
$Y_s=X\setminus K_s$.  For $H\in\mathcal H_s$, put $P_H=Y_s\setminus H$.
The set $K_s$ is
$L$-compactoid by Lemma~\ref{lem:strategy-filter} and
Proposition~\ref{prop:compactoid-hull}.

The family $\{P_H:H\in\mathcal H_s\}$ is an $L|_{Y_s}$-cover.  Indeed,
let $\mathcal F\to_{L|_{Y_s}}y$.  Since $y\notin K_s$, choose
$H\in\mathcal H_s$ with $y\notin\cl_LH$.  The filter
$\mathcal F^{\uparrow X}$ cannot mesh with $H$, so
$P_H\in\mathcal F$.

By hereditary Lindel\"ofness, choose a countable basic subcovering
$\{P_{H_{s,n}}:n<\omega\}$.  For every $n$, choose $L$-covers
$\mathcal U_{s,n,1},\ldots,\mathcal U_{s,n,k(s,n)}$ such that
\begin{equation}\label{eq:lindelof-node}
 \bigcap_{i=1}^{k(s,n)}B(s,\mathcal U_{s,n,i})
 \subseteq H_{s,n},
\end{equation}
and declare them to be possible next moves after $s$.

If $\mathcal B_s$ does not have the finite intersection property, put
$K_s=\varnothing$ and choose finitely many $L$-covers
$\mathcal U_{s,1},\ldots,\mathcal U_{s,m(s)}$ such that
\[
 \bigcap_{i=1}^{m(s)}B(s,\mathcal U_{s,i})=\varnothing.
\]
Declare these covers to be the possible next moves.

As in the proof of Theorem~\ref{thm:alster}, these choices generate a
countable tree $T$ of finite sequences of Alice's moves.  Suppose that
$x\notin\bigcup_{s\in T}K_s$.  At a node of the first type, the principal
ultrafilter at $x$ on $Y_s$ converges to $x$.  Since
$\{P_{H_{s,n}}:n<\omega\}$ is a basic subcovering, one has
$x\in P_{H_{s,n}}$ for some $n$, and hence $x\notin H_{s,n}$.
Equation~\eqref{eq:lindelof-node} supplies an $i$ such that
$x\notin B(s,\mathcal U_{s,n,i})$.  At a node of the second type, the
empty intersection supplies such an $i$ as well.

Following these successors yields a play in which Bob follows $\sigma$ but
never covers $x$, a contradiction.  Thus
$X=\bigcup_{s\in T}K_s$, a countable union of $L$-compactoid subsets.
\end{proof}

\begin{corollary}\label{cor:compact-decomposition}
Let $(X,L)$ be a regular, hereditarily Lindel\"of pretopological space.  If
\[
 \Bob\wins\mathsf G_{\fin}(\mathcal C_L,\Cov(X)),
\]
then $X$ is a countable union of $L$-compact subsets.
\end{corollary}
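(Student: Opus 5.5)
The plan is to combine Theorem~\ref{thm:decomposition} with the pretopological part of Proposition~\ref{prop:compactoid-hull}. This is the same pattern as in Corollary~\ref{cor:pretopological-alster}: there, each compactoid set was enlarged to a compact one.

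First, Theorem~\ref{thm:decomposition} applies, since $(X,L)$ is regular and hereditarily Lindel\"of and Bob has a winning strategy. It gives $X=\bigcup_{n<\omega}A_n$ with each $A_n$ an $L$-compactoid subset. We may discard empty pieces, or keep them, since $\varnothing$ is trivially $L$-compact. If $X=\varnothing$ there is nothing to prove.

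Second, fix a nonempty $L$-compactoid $A_n$. Then the principal filter $A_n^\uparrow$ is a proper filter, and it is $L$-compactoid: every ultrafilter finer than $A_n^\uparrow$ contains $A_n$ and hence has a nonempty $L$-limit set. Because $L$ is a regular pretopology, Proposition~\ref{prop:compactoid-hull} shows that $K_n=\Cl_L(A_n^\uparrow)$ is $L$-compact. Moreover, $K_n\supseteq A_n$. Indeed, for $x\in A_n$ the convergent filter $\dot x$ meshes with every superset of $A_n$, so $x\in\cl_LH$ for every $H\in A_n^\uparrow$. Therefore $X=\bigcup_n K_n$ is a countable union of $L$-compact subsets.

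Alternatively, one could use the sets $K_s=\Cl_L\mathcal H_s$ built in the proof of Theorem~\ref{thm:decomposition}, which are already closures of compactoid filters and hence compact by Proposition~\ref{prop:compactoid-hull}. However, that route relies on the internals of the proof rather than its statement, so the principal-filter route above is cleaner. There is no real obstacle here. The only point needing care is that $A_n^\uparrow$ must be a proper filter, which is why empty pieces are handled separately.
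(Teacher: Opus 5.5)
Your proof is correct, but your main route is not the paper's. The paper's proof is exactly the alternative you set aside. Each piece $K_s=\Cl_L\mathcal H_s$ produced in the proof of Theorem~\ref{thm:decomposition} is already the closure of an $L$-compactoid filter, hence $L$-compact by the pretopological clause of Proposition~\ref{prop:compactoid-hull}, and the remaining pieces are empty.

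You instead use Theorem~\ref{thm:decomposition} only through its statement. You enlarge each compactoid piece $A_n$ to $\Cl_L(A_n^\uparrow)$, which by monotonicity of $\cl_L$ is just $\cl_LA_n$, exactly as in the proof of Corollary~\ref{cor:pretopological-alster}.

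Both arguments rest on the same clause of Proposition~\ref{prop:compactoid-hull}. The paper's version avoids the enlargement step. It also avoids your extra checks that $A_n^\uparrow$ is a proper $L$-compactoid filter and that $A_n\subseteq\Cl_L(A_n^\uparrow)$, at the cost of reopening the earlier proof. Your version is modular and isolates a fact independent of the game: in a regular pretopological space every $L$-compactoid set lies inside an $L$-compact one. Consequently any countable union of $L$-compactoid sets is a countable union of $L$-compact sets.
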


\begin{proof}
At every node of the proof of Theorem~\ref{thm:decomposition},
$K_s$ is $L$-compact by Proposition~\ref{prop:compactoid-hull} when
$\mathcal B_s$ has the finite intersection property; at the remaining nodes
$K_s=\varnothing$.
\end{proof}

For regular hereditarily Lindel\"of topological spaces,
Corollary~\ref{cor:compact-decomposition} is the familiar implication
\[
 \Bob\wins\mathsf G_{\fin}(\mathcal C_L,\Cov(X))
 \quad\Longrightarrow\quad X\text{ is }\sigma\text{-compact}.
\]
Conversely, if
$X=\bigcup_{n<\omega}K_n$ with each $K_n$ compact, then, in inning $n$, Bob
chooses finitely many members of Alice's $L$-cover whose interiors cover
$K_n$.  Thus $\Bob\wins\mathsf G_{\fin}(\mathcal C_L,\Cov(X))$.

\section*{Acknowledgements}

The second author was financed in part by the Coordena\c{c}\~ao de Aperfei\c{c}oamento
de Pessoal de N\'ivel Superior -- Brasil (CAPES) -- Finance Code 001.

\bigskip
\noindent\textbf{Declaration of generative AI and AI-assisted technologies
in the writing process.}\par
\smallskip

\noindent During the preparation of this work, the authors used OpenAI’s Codex for the discussion
of ideas and for assistance with the writing of the manuscript. The authors reviewed and
edited the resulting text and take full responsibility for the content of the article.

\Addresses

\end{document}